\newtheorem{thm}{Theorem}[section]
\newtheorem{lem}[thm]{Lemma}
\def\R{{\mathbb R}}
\newcommand\norm[1]{\left\lVert#1\right\rVert}
\DeclareMathOperator*{\argmin}{argmin}
\date{ }
\author{Rujun Jiang\thanks{School of Data Science, Fudan University, Shanghai, China, rjjiang@fudan.edu.cn}
 \and Man-Chung Yue\thanks{Department of Applied Mathematics, The Hong Kong Polytechnic University, Hung Hom, Hong Kong, manchung.yue@polyu.edu.hk}\and Zhishuo Zhou\thanks{School of Data Science, Fudan University, Shanghai, China, zhouzs18@fudan.edu.cn}}
\title{An accelerated first-order method with complexity analysis for solving cubic regularization subproblems}
\begin{document}

\maketitle
\begin{abstract}
We propose a first-order method to solve the cubic regularization subproblem (CRS) based on a novel reformulation. The reformulation is a constrained convex optimization problem whose feasible region admits an easily computable projection. Our reformulation requires computing the minimum eigenvalue of the Hessian. To avoid the expensive computation of the exact minimum eigenvalue, we develop a surrogate problem to the reformulation where the exact minimum eigenvalue is replaced with an approximate one.
We then apply first-order methods such as the  Nesterov's  accelerated projected gradient method  (APG) and projected Barzilai-Borwein method to solve the surrogate problem. As our main theoretical contribution, we show that when an $\epsilon$-approximate minimum eigenvalue is computed by the Lanczos method and the surrogate problem is approximately solved by APG, our approach returns an $\epsilon$-approximate solution to CRS in $\tilde O(\epsilon^{-1/2})$ matrix-vector multiplications (where $\tilde O(\cdot)$ hides the logarithmic factors).
Numerical experiments show that our methods are comparable to and outperform the Krylov subspace method in the easy and hard cases, respectively. 
We further implement our methods as subproblem solvers of adaptive cubic regularization methods, and numerical results show that our algorithms are comparable to the state-of-the-art algorithms.
\end{abstract}

\section{Introduction}
Motivated by applications in machine learning and signal processing, optimization problems of the following form have attracted significant attention:
\begin{equation} \label{opt:nonconvex}
\min_{x \in \mathbb{R}^n} F(x) ,
\end{equation}
where $F$ is a twice continuously differentiable function that is possibly non-convex.
The cubic regularization method \cite{nesterov2006cubic,cartis2011adaptive} is among the most successful algorithms for solving problem~\eqref{opt:nonconvex}. 
At each iteration of the cubic regularization method, the subproblem takes the form
\begin{equation}\label{opt:CRS}\tag{CRS}
\min_{x \in \mathbb{R}^n}  f_1(x):=\frac{1}{2} x^TAx+b^Tx+\frac{\rho}{3}\norm{x}^3,
\end{equation}
where $\norm{\cdot}$ denotes the Euclidean norm, $A$ is an $n\times n$ symmetric matrix (not necessarily positive semidefinite) and $\rho$ is a regularization parameter.
In particular, $A$ and $b$ represent the Hessian and gradient of the function $F$ at the current iterate, respectively.
It was first proved by Nesterov and Polyak \cite{nesterov2006cubic} that the cubic regularization method enjoys an iteration complexity of $O(\epsilon^{-3/2})$
if each subproblem is solved exactly. Cartis et al.~\cite{cartis2011adaptive} developed a generalization of the cubic regularization method, called ARC, which allows the subproblems to be solved inexactly and the regularization parameter $\rho>0$ to be chosen adaptively. In the same paper, they showed that the iteration complexity of ARC is again $O(\epsilon^{-3/2})$. Complementing to these global complexity results, Yue et al.~\cite{yue2019quadratic} showed that the cubic regularization method enjoys a local quadratic convergence rate under an error bound-type condition.

Despite the above strong theoretical guarantees, the practical performance of the cubic regularization method depends critically on the efficiency of solving its subproblems. As such, there have been considerable endeavors on developing fast algorithms for solving~\eqref{opt:CRS}. One of the most successful algorithms for solving large-scale instances of \eqref{opt:CRS} in practice is the Krylov subspace method~\cite{cartis2011adaptive}.
Carmon and Duchi~\cite{carmon2018analysis} provided the first the convergence rate analysis of the Krylov subspace method. In particular, they showed that the Krylov subspace method achieves an $\epsilon$-approximate optimal solution in $O(\epsilon^{-1/2})$ or $O(\sqrt\kappa\log \epsilon^{-1})$ operations (matrix-vector multiplications) in the easy case\footnote{For the problem~\eqref{opt:CRS}, it is said to be in the easy if the optimal solution $x^*$ satisfies  $\rho\|x^*\|>-\lambda_1$, where $\lambda_1$ is the minimum eigenvalue of $A$, and hard case otherwise.}, where
$\kappa$ is the condition number of \eqref{opt:CRS}.
Unfortunately, the Krylov subspace method may fail to converge to the optimal solution when the problem \eqref{opt:CRS} is in the hard case or close to being in the hard case~\cite{carmon2018analysis}.
Carmon and Duchi also showed in another paper \cite{carmon2019gradient} that the gradient descent method is able to converge to the global minimizer if the step size is  sufficiently small, and the convergence rate is $\tilde O(\epsilon^{-1})$ (where $\tilde O(\cdot)$ hides the logarithmic factors). Although, for the problem~\eqref{opt:CRS}, the convergence rate of the gradient descent method is worse than that of the Krylov subspace method, it works in both the easy and hard cases. On the other hand, based on the cubic regularization method, Agarwal et al.~\cite{agarwal2017finding} derived an algorithm with $\tilde O({\epsilon^{-7/4}})$ operations for finding an approximate local minimum of problem \eqref{opt:nonconvex},  i.e.,  a point $x\in\mathbb{R}^n$ satisfying
\begin{equation*}
\label{eq:station}
\|\nabla F(x)\|\le\epsilon \quad \text{and}\quad \nabla^2 F(x) + \sqrt\epsilon I\succeq 0 ,
\end{equation*}
where $I$ denotes the identity matrix of appropriate dimension and, for any symmetric matrix $M$, the inequality $M \succeq 0$ means that $M$ is positive semidefinite.
A key component of their result is an algorithm for computing an approximate solution to the problem \eqref{opt:CRS} in $\tilde O(\epsilon^{-1/4})$ operations.
However, the approximate solution returned by this algorithm is not an $\epsilon$-approximate global minimizer of the problem~\eqref{opt:CRS} in the traditional sense (see \cite[Theorem 2]{agarwal2017finding} for details). Furthermore, the algorithm in \cite{agarwal2017finding} for solving \eqref{opt:CRS} requires sophisticated parameter tuning, and no numerical results had been provided in the paper. 
Finally, a Newton-like method for solving problems of the form~\eqref{opt:nonconvex} had been recently developed by Birgin and Mart{\'\i}nez~\cite{birgin2019newton}. Each subproblem of their algorithm, which is similar to but not the same as \eqref{opt:CRS}, is constructed and can be efficiently solved by using the so-called mixed factorization (see \cite[Section 2]{birgin2019newton} for details) of the (approximate) Hessian of $F$ at the current point. Birgin and Mart{\'\i}nez~\cite{birgin2019newton} advocated in particular the mixed factorization obtained from the Bunch-Parlett-Kaufman factorization~\cite{golub2012matrix}, a matrix factorization whose computational cost is similar to that of the Cholesky factorization.

From the above discussion, it is desirable to have an algorithm for solving the problem \eqref{opt:CRS} that works efficiently in practice for both the hard and easy cases and enjoys theoretical guarantees. In this paper, we achieve this goal by developing a first-order method for solving arbitrary instances of \eqref{opt:CRS} with  $\tilde O(\epsilon^{-1/2})$ matrix-vector multiplications.
%
Our approach is based on a novel reformulation of the problem~\eqref{opt:CRS}, which is a constrained convex optimization problem built using the minimum eigenvalue of the matrix $A$. The feasible region of the reformulation admits an efficient, closed-form projection. Therefore, when the exact computation of the minimum eigenvalue is viable, we can apply any algorithm for solving constrained convex optimization problems to solve the reformulation to global optimality. The optimal solution to the problem~\eqref{opt:CRS} can then be constructed by using the optimal solution of the reformulation. In practice, it is often prohibitively expensive to compute the exact minimum eigenvalue of the matrix $A$, if not impossible. We circumvent this limitation by developing a surrogate problem to the reformulation. The surrogate problem is again a constrained convex optimization problem with an easily computable projection onto its feasible region. More importantly, the surrogate problem requires only an approximate minimum eigenvalue, which can be computed efficiently by using, e.g., the Lanczos method \cite{golub2012matrix}. Similarly, an $\epsilon$-approximate optimal solution of the problem~\eqref{opt:CRS} can be constructed from an $\epsilon$-approximate solution of the surrogate problem.

The said bound $\tilde O(\epsilon^{-1/2})$ on the number of operations is proved by combining the follow two ideas. First, for any $\delta \in (0,1)$, the Lanczos method returns an $\epsilon$-approximate minimum eigenvalue in $O(\epsilon^{-1/2}\log(n/\delta))$ matrix-vector multiplications with probability at least $1 - \delta$.
Second, solving the surrogate problem by the Nesterov's accelerated projected gradient descent method \cite{nesterov1983method,beck2009fast} (APG) requires $O(\epsilon^{-1/2})$ iterations, where each iteration consists of one gradient and Hessian evaluations and one matrix-vector multiplication.
Therefore, the total number of operations of our method is bounded by $O(\epsilon^{-1/2}\log(n/\delta))$ (see Theorem \ref{thm:err}).  This bound is similar to the sublinear bound for the Krylov subspace method proved in \cite{carmon2018analysis} in the easy case and better than that of the gradient descent method in \cite{carmon2019gradient}. Note also that our bound is for the subproblem and hence not directly comparable with that of \cite{agarwal2017finding}. Besides, our algorithm has the advantage that it is easily implementable. Furthermore, as we shall see in our numerical section, the proposed algorithm works efficiently in practice for high-dimensional problems---our algorithm shows a comparable performance to the Krylov subspace method in the easy case. An another advantage of our algorithm is that, unlike the Krylov subspace method, it works in both the easy and hard cases. This saves us from the computational overhead due to the need of detecting the hard case.

We remark that our approach is inspired by the recent line of research \cite{ho2017second,wang2017linear} on linear-time algorithms for the trust region subproblem
\begin{equation} \label{opt:TRS}\tag{TRS}
\begin{array}{c@{\quad}l}
\displaystyle\min_{x\in \mathbb{R}^n} & \displaystyle \frac{1}{2}x^TAx+b^Tx \\
\noalign{\smallskip}
\mbox{subject to} & \displaystyle \|x\|^2 \le 1,
\end{array}
\end{equation}
and the close resemblance between the problems~\eqref{opt:CRS} and \eqref{opt:TRS}.
More specifically, the algorithms in \cite{ho2017second,wang2017linear} are based on a convex reformulation for the \eqref{opt:TRS} derived in \cite{flippo1996duality}.
Motivated by the works~\cite{ho2017second,wang2017linear}, Jiang and Li~\cite{jiang2019novel} recently derived a novel convex reformulation for the generalized trust region subproblem, which further inspires us to explore hidden convexity for \eqref{opt:CRS} in this paper.
It should also be pointed out that our reformulation and its surrogate problem offer great potential and flexibility for the design of fast algorithms to solve the problem~\eqref{opt:CRS}. Indeed, one can apply any algorithm for constrained convex optimization problems to solve these two optimization problems. Proving theoretical guarantees for other algorithms for solving these two models is left as a future research.

The remaining of this paper is organized as follows. In Section 2, we derive our convex reformulation based on the minimum eigenvalue of matrix $A$ and discuss the computation of the projection to its feasible region. In Section 3, we present a surrogate problem for \eqref{opt:CRS}  and theoretically analyze the complexity of our method when applying the APG to solve the surrogate problem with an approximate minimum eigenvalue computed by the Lanczos method. In Section 4, we first compare the numerical performance of our methods with the Krylov subspace method and then compare our methods against others as a subproblem solver for ARC. We conclude our paper in Section 5.

\section{Convex reformulation}
We first record the optimality condition of \eqref{opt:CRS} \cite{nesterov2006cubic,cartis2011adaptive}, which is given by the following system of equations in $x$ and $\lambda$:
\begin{equation}
\label{eq:optCRS}
Ax+b+\lambda x=0,\quad  A+\lambda I\succeq 0,\quad \text{and}\quad \lambda=\rho \|x\|.
\end{equation}
This optimality condition will be frequently used in this paper.
It is obvious that \eqref{opt:CRS} is equivalent to the following problem:
\begin{equation} \label{opt:RP}\tag{RP}
\begin{array}{c@{\quad}l}
\displaystyle\min_{x\in \mathbb{R}^n, y\in \mathbb{R}} & \displaystyle \frac{1}{2} x^TAx+b^Tx+\frac{\rho}{3}y^\frac{3}{2} \\
\noalign{\smallskip}
\mbox{subject to} & \displaystyle\norm{x}^2\leq y.
\end{array}
\end{equation}
Note that the feasible region $\{(x,y)\in \R^n \times\R: \|x\|^2 \le y\}$ of the problem~\eqref{opt:RP} is convex. Therefore, when $A\succeq 0$, \eqref{opt:RP} is a convex optimization problem and can be solved efficiently by various methods, e.g., APG or projected Barzilai-Borwein method (BBM) \cite{barzilai1988two,tropp2010computational}.
Hence, from now on, we assume that the minimum eigenvalue of matrix $A$, denoted by $\lambda_1$, is negative, i.e., $\lambda_1 < 0$.
Consider the optimization problem
\begin{equation} \label{opt:CP}\tag{CP}
\begin{array}{c@{\quad}l}
\displaystyle\min_{x\in \mathbb{R}^n, y\in \mathbb{R}} & \displaystyle f_2(x,y):=\frac{1}{2} x^T(A-\lambda_1 I) x+b^Tx+\frac{\rho}{3}y^\frac{3}{2}+\frac{\lambda_1 }{2}y \\
\noalign{\smallskip}
\mbox{subject to} & \displaystyle \norm{x}^2\leq y.
\end{array}
\end{equation}
Problem \eqref{opt:CP} is a convex  problem because $f_2$ is separable in $x$ and $y$
and is convex in each of these two variables. The following theorem shows that problem~\eqref{opt:CRS} is equivalent to problem~\eqref{opt:CP}.
\begin{thm}\label{thm:cvreform}
Problem \eqref{opt:CRS} is equivalent to \eqref{opt:CP} in the following sense. First, the two problems have the same optimal value. Second, if $x^*$ is an optimal solution to \eqref{opt:CRS}, then
$(x^*,\|x^*\|^2)$ is an optimal solution to \eqref{opt:CP}. Third, if $(\tilde x,\tilde y)$ is an optimal solution to \eqref{opt:CP}, then an optimal solution to \eqref{opt:CRS} is given by
\begin{equation*}
\hat{x} =
\begin{cases}
\tilde{x} & \text{if } \|\tilde x\|^2=\tilde y,\\
\tilde{x} + \zeta v & \text{if } \|\tilde x\|^2 < \tilde{y},
\end{cases}
\end{equation*}
where $\zeta $ is a root of the quadratic equation $\|\tilde x+\zeta v\|^2=\tilde y$ and $v$ is an eigenvector associated with $\lambda_1$.
\end{thm}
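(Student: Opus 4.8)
The plan is to exploit the fact that \eqref{opt:CP} is obtained from \eqref{opt:RP} by adding the term $-\frac{\lambda_1}{2}\|x\|^2 + \frac{\lambda_1}{2}y = \frac{\lambda_1}{2}(y - \|x\|^2)$ to the objective. On the feasible region $\|x\|^2 \le y$ this added term is $\le 0$ since $\lambda_1 < 0$, and it vanishes exactly when $\|x\|^2 = y$. So for every feasible $(x,y)$ we have $f_2(x,y) \le f_1$-type value, and the two agree on the ``boundary'' $\|x\|^2 = y$, where moreover $f_1(x) = \frac{1}{2}x^TAx + b^Tx + \frac{\rho}{3}\|x\|^3$ corresponds exactly to $f_2(x,\|x\|^2)$ via $y^{3/2} = \|x\|^3$. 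This immediately gives that the optimal value of \eqref{opt:CP} is $\le$ that of \eqref{opt:CRS} (plug in $(x^*,\|x^*\|^2)$), and the bulk of the work is the reverse inequality together with the solution-recovery statement.

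First I would set up the correspondence on the boundary: since \eqref{opt:CRS} and \eqref{opt:RP} are equivalent with the constraint active at optimality (any optimal $x^*$ of \eqref{opt:CRS} gives $(x^*,\|x^*\|^2)$ optimal for \eqref{opt:RP}), and on $\{\|x\|^2 = y\}$ we have $f_2(x,y) = f_1(x)$, it follows that $\mathrm{val}(\text{CP}) \le \mathrm{val}(\text{RP}) = \mathrm{val}(\text{CRS})$. For the reverse direction, let $(\tilde x, \tilde y)$ be optimal for \eqref{opt:CP}. I would write the first-order optimality / KKT conditions for the convex problem \eqref{opt:CP}: with multiplier $\mu \ge 0$ for the constraint $\|x\|^2 - y \le 0$, stationarity in $x$ gives $(A - \lambda_1 I)\tilde x + b + 2\mu \tilde x = 0$, stationarity in $y$ gives $\frac{\rho}{2}\tilde y^{1/2} + \frac{\lambda_1}{2} - \mu = 0$, i.e.\ $\mu = \frac{1}{2}(\rho \tilde y^{1/2} + \lambda_1)$, plus complementary slackness $\mu(\|\tilde x\|^2 - \tilde y) = 0$. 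Substituting back, $A\tilde x + b + (2\mu - \lambda_1 + \lambda_1)\tilde x$... more carefully, $(A - \lambda_1 I + 2\mu I)\tilde x + b = 0$; setting $\lambda := 2\mu - \lambda_1 = \rho\tilde y^{1/2}$ gives $A\tilde x + b + \lambda \tilde x = 0$ with $\lambda = \rho \tilde y^{1/2}$, and $A + \lambda I = (A - \lambda_1 I) + 2\mu I \succeq 0$ since $A - \lambda_1 I \succeq 0$ and $\mu \ge 0$. This is precisely the optimality system \eqref{eq:optCRS} for \eqref{opt:CRS} except that $\lambda = \rho\sqrt{\tilde y}$ rather than $\rho\|\tilde x\|$ — they coincide when the constraint is active.

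The heart of the argument is then the hard case, i.e.\ when $\|\tilde x\|^2 < \tilde y$. Complementary slackness forces $\mu = 0$, so $\lambda = -\lambda_1$, hence $A\tilde x + b = \lambda_1 \tilde x$ and $\rho\sqrt{\tilde y} = -\lambda_1$, and $A + \lambda I = A - \lambda_1 I \succeq 0$ is singular with null vector $v$. I would then verify that $\hat x = \tilde x + \zeta v$, where $\zeta$ solves $\|\tilde x + \zeta v\|^2 = \tilde y$ (such a real root exists because $\|\tilde x\|^2 < \tilde y$ makes the quadratic in $\zeta$ have a nonpositive constant term after moving $\tilde y$ across, guaranteeing a real root), satisfies the optimality system \eqref{eq:optCRS}: since $Av = \lambda_1 v$, we get $A\hat x + b + (-\lambda_1)\hat x = (A\tilde x + b - \lambda_1 \tilde x) + \zeta(A v - \lambda_1 v) = 0$; the condition $A + (-\lambda_1)I \succeq 0$ is unchanged; and $\rho\|\hat x\| = \rho\sqrt{\tilde y} = -\lambda_1$, so $\lambda = \rho\|\hat x\|$ holds. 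Hence $\hat x$ satisfies the sufficient optimality conditions for \eqref{opt:CRS}, so it is optimal. Finally I would check the objective values match: $f_1(\hat x) = f_2(\hat x, \|\hat x\|^2) = f_2(\tilde x + \zeta v, \tilde y)$, and because $v \perp$-direction computations show $\frac{1}{2}\hat x^T(A-\lambda_1 I)\hat x + b^T \hat x = \frac{1}{2}\tilde x^T(A-\lambda_1 I)\tilde x + b^T\tilde x$ (the cross and quadratic terms in $\zeta v$ drop out since $(A - \lambda_1 I)v = 0$), we get $f_2(\hat x,\tilde y) = f_2(\tilde x, \tilde y)$, which is the optimal value of \eqref{opt:CP}; combined with $f_1(\hat x) \ge \mathrm{val}(\text{CRS}) \ge \mathrm{val}(\text{CP})$ this pins down all the equalities and proves equivalence.

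The main obstacle I anticipate is handling the KKT conditions cleanly when $\tilde y = 0$ (so $\sqrt{\tilde y}$ is at the boundary of the domain of $y \mapsto y^{3/2}$ and the derivative $\frac{\rho}{2}y^{1/2}$ degenerates) — one has to argue, perhaps via a direct perturbation argument or by noting $\lambda_1 < 0$ forces $\tilde y > 0$ at optimality — and, relatedly, justifying that a convex-analysis stationarity condition (subdifferential, since $y^{3/2}$ is only once differentiable at $0$) is both necessary and sufficient here. I would also take care with the case distinction in the easy case $\|\tilde x\|^2 = \tilde y$, where $\hat x = \tilde x$ and everything follows directly from the reformulation $f_2(x,\|x\|^2) = f_1(x)$ and the equivalence of \eqref{opt:CRS} and \eqref{opt:RP}.
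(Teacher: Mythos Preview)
Your proposal is correct and follows essentially the same route as the paper: both arguments use the KKT system of \eqref{opt:CP} together with the characterization \eqref{eq:optCRS} of optimal solutions to \eqref{opt:CRS}, and in the inactive-constraint (hard) case both perturb $\tilde x$ along a minimum eigenvector $v$ to land on $\|\hat x\|^2=\tilde y$. The only cosmetic difference is that in the hard case the paper first observes $b^Tv=0$ to conclude $(\tilde x+\zeta v,\tilde y)$ is still optimal for \eqref{opt:CP} and then invokes the active case, whereas you verify the system \eqref{eq:optCRS} for $\hat x$ directly; your concern about $\tilde y=0$ is handled exactly as you suggest, since $\lambda_1<0$ makes $\tfrac{\rho}{3}y^{3/2}+\tfrac{\lambda_1}{2}y$ strictly decreasing near $y=0$.
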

\begin{proof}
Denote by $\text{Val}\eqref{opt:CRS}$ and $\text{Val}\eqref{opt:CP}$ the optimal values of problems \eqref{opt:CRS} and \eqref{opt:CP}, respectively. We first observe that \eqref{opt:CP} is a convex problem and satisfies the Slater condition. Assume that $x^*$ is an optimal solution to \eqref{opt:CRS}. By using the optimality condition \eqref{eq:optCRS}, we can easily show that the triplet $(x,y , \mu) = (x^*,\|x^*\|^2, \tfrac{1}{2} (\rho\|x^*\|+\lambda_1))$ satisfies the KKT system of \eqref{opt:CP}:
\begin{equation}\label{eq:CP_KKT}
(A-\lambda_1I)x+b+2\mu x=0 \text{ and}~\frac{\rho}{2}y^\frac{1}{2}+\frac{\lambda_1}{2}-\mu=0
\end{equation}
This implies that $(x^*,\|x^*\|^2) $ is an optimal solution to \eqref{opt:CP} and that $\text{Val}\eqref{opt:CRS} \geq \text{Val}\eqref{opt:CP}.$ On the other hand, because of the assumption $\lambda_1 < 0 $ and the constraint $\|x\|^2 \le y$, we have that $\text{Val}\eqref{opt:CRS} \le \text{Val}\eqref{opt:CP}.$ Therefore, $\text{Val}\eqref{opt:CRS} = \text{Val}\eqref{opt:CP}.$ This completes the proof of the first and second claims.

To prove the third claim, assume that \eqref{opt:CP} has an optimal solution $(\tilde x,\tilde y)$. Suppose  $\mu$ is a Lagrangian multiplier associated with the constraint in \eqref{opt:CP}.  If $\|\tilde x\|^2=\tilde y$, from the KKT system~\eqref{eq:CP_KKT}, we have that
\begin{equation}\label{eq:pf_1}
A\tilde x-\lambda_1 \tilde x+b+2\mu \tilde x=0
\end{equation}
and
\begin{equation}\label{eq:pf_2}
\frac{1}{2}\rho\sqrt{\tilde y}+ \frac{1}{2}\lambda_1 -\mu=0 .
\end{equation}
Equation~\eqref{eq:pf_2} implies
$\mu=\rho\sqrt{\tilde y}/2+\lambda_1/2\ge0$. This, together with $\|\tilde x\|^2=\tilde y$ and  $A\tilde x-\lambda_1 \tilde x+b+2 \mu \tilde x=0$, implies that  $A\tilde x+b+\bar\lambda \tilde x=0$ and $A+\bar\lambda I\succeq0$, for $\bar\lambda=2 \mu -\lambda_1=\rho\|\tilde x\|.$ Hence, due to \eqref{eq:optCRS},  $\tilde x$ is also optimal for \eqref{opt:CRS} and the objective values of \eqref{opt:CRS} and \eqref{opt:CP} are the same due to   $\|\tilde x\|^2=\tilde y$.

Next, we consider the case of  $\|\tilde x\|^2<\tilde y$. Let $v$ be an eigenvector of matrix $A$ associated with the minimum eigenvalue $\lambda_1$. By complementary slackness, $ \mu =0$. Then, equation~\eqref{eq:pf_1} implies that $b^Tv=0$.  Hence, there exists $\zeta$ such that $\|\tilde x+ \zeta v\|=\sqrt{\tilde y}$ and $(\tilde x+\zeta v,\tilde y)$ is still a solution to \eqref{opt:CP}. Using the same argument for the case of $\|\tilde x\|^2=\tilde y$, we can show that $\tilde x+\zeta v$ is an optimal solution for \eqref{opt:CRS}. This completes the proof.
\end{proof}

Optimization problems of the form
\begin{equation}\label{opt:con_com}
\min_{x\in \R^n, y\in \R} g(x,y)+h(x,y),
\end{equation}
where $g$ is a smooth convex function and $h$ is a non-smooth convex function, are called  convex composite minimization problems.
Letting $S=\{(x,y):\norm{x}^2\leq y\}$,
problem \eqref{opt:CP} can be written as a convex composite minimization problem:
\begin{equation*}
\min_{x\in \R^n, y\in \R} f_2(x,y)+ \iota_S(x,y),
\end{equation*}
where $\iota_S$ is the indicator function $$\iota_S(x,y):=\left\{
\begin{array}{ll}0, &\text{if }(x,y)\in S,\\
+\infty, &\text{otherwise}.\end{array}\right.$$
General convex composite minimization problems~\eqref{opt:con_com} can be solved by many different algorithms such as  APG, BBM, proximal quasi-Newton methods \cite{Ghanbari2018} and proximal Newton methods~\cite{yue2019family}. 
In order to apply these methods, we need to efficiently compute the proximal mapping with respect to the non-smooth function $h$ in \eqref{opt:con_com}. In our situation, $h = \iota_S$ and hence the proximal mapping reduces to the orthogonal projection $ \Pi_S(x,y)$ onto the closed convex set $S$, i.e.,
\begin{equation*}
 \Pi_S(x,y) = \argmin_{(x',y') \in S} \| (x',y') - (x,y) \|^2 .
\end{equation*}
The following theorem shows that such a projection can be done in $O(n)$ time.
\begin{thm}
\label{thm:pj2S}
For any point $(x_0, y_0) \in \R^n \times \R$,
the projection $\Pi_S(x_0,y_{0})$ is given by
\begin{equation}
\Pi_S(x_0,y_{0}) =
\begin{cases}
\displaystyle \left(x_0,y_0 \right), & \text{if } \|x_0\|^2 \leq y_0,\\
\displaystyle \left(\frac{x_0}{1+\mu^*} ,\, y_0+\frac{\mu^*}{2}\right), & \text{otherwise,}
\end{cases}
\end{equation}
where $\mu^{*}$ is the unique  solution in the interval  $[\max\{0,-2y_0\},\infty)$ of the univariate cubic equation
\begin{equation}\label{eq:cubic}
\frac{1}{2}\mu^3+(y_0+1)\mu^2+(2y_0+\frac{1}{2})\mu-x_0^Tx_0+y_0=0.
\end{equation}
\end{thm}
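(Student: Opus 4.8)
The plan is to solve the defining optimization problem for $\Pi_S(x_0,y_0)$ directly via Lagrangian/KKT analysis. First I would dispose of the trivial case: if $\|x_0\|^2 \le y_0$, then $(x_0,y_0) \in S$, so the projection is $(x_0,y_0)$ itself. From now on assume $\|x_0\|^2 > y_0$, so the constraint $\|x\|^2 \le y$ is active at the optimum. Write the KKT conditions for $\min \tfrac12\|x-x_0\|^2 + \tfrac12(y-y_0)^2$ subject to $\|x\|^2 - y \le 0$, with multiplier $\mu \ge 0$: stationarity in $x$ gives $x - x_0 + 2\mu x = 0$, i.e. $x = x_0/(1+2\mu)$; stationarity in $y$ gives $(y - y_0) - \mu = 0$, i.e. $y = y_0 + \mu$. (One should double check whether the paper's statement uses $\mu$ or $2\mu$ as the multiplier — the displayed answer $\bigl(x_0/(1+\mu^*),\, y_0+\mu^*/2\bigr)$ suggests the constraint was written as $\tfrac12(\|x\|^2 - y)$ or equivalently $\mu$ here equals twice the ``textbook'' multiplier; I would fix the convention at the outset so that the substitution below matches equation~\eqref{eq:cubic}.)

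Next I would impose complementary slackness: since the constraint is active, $\|x\|^2 = y$, so substituting the expressions for $x$ and $y$ in terms of $\mu$ yields a single univariate equation $\|x_0\|^2/(1+\mu)^2 = y_0 + \mu/2$ (in the paper's normalization). Clearing denominators, $\|x_0\|^2 = (y_0 + \mu/2)(1+\mu)^2$, which expands to the cubic in \eqref{eq:cubic}: $\tfrac12\mu^3 + (y_0+1)\mu^2 + (2y_0 + \tfrac12)\mu - x_0^Tx_0 + y_0 = 0$. I would carry out this expansion explicitly to confirm the coefficients match.

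The remaining — and genuinely substantive — step is to prove that this cubic has a \emph{unique} root in the interval $[\max\{0,-2y_0\},\infty)$, and that this root is the one delivering the projection. The lower endpoint $\max\{0,-2y_0\}$ is exactly the constraint $y_0 + \mu/2 \ge 0$ (needed so that the right-hand side $\|x\|^2 = y$ is nonnegative) together with $\mu \ge 0$. I would let $\phi(\mu) = (y_0+\mu/2)(1+\mu)^2 - \|x_0\|^2$ and show: (i) at the left endpoint $\mu_0 := \max\{0,-2y_0\}$ we have $\phi(\mu_0) \le 0$ — indeed if $\mu_0 = -2y_0 > 0$ then $(y_0+\mu_0/2) = 0$ so $\phi(\mu_0) = -\|x_0\|^2 \le 0$, and if $\mu_0 = 0$ then $\phi(0) = y_0 - \|x_0\|^2 < 0$ by the case assumption; (ii) $\phi(\mu) \to +\infty$ as $\mu \to \infty$, giving existence of a root by the intermediate value theorem; (iii) $\phi$ is strictly increasing on $[\mu_0,\infty)$, giving uniqueness. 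For (iii), on the relevant interval both factors $y_0 + \mu/2 \ge 0$ and $(1+\mu)^2 > 0$ are nonnegative and each is nondecreasing with at least one strictly increasing, so their product is strictly increasing; alternatively differentiate $\phi$ and check $\phi'(\mu) = \tfrac12(1+\mu)^2 + 2(y_0+\mu/2)(1+\mu) > 0$ there. I expect step (iii) — the monotonicity argument pinning down uniqueness on precisely that half-line, and arguing that the returned point is the genuine global minimizer rather than merely a KKT point — to be the main obstacle, though convexity of the projection problem makes the KKT point automatically optimal once existence and feasibility ($\mu \ge 0$, constraint active) are in hand.
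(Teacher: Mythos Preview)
Your proposal is correct and follows essentially the same route as the paper: KKT conditions give $x=x_0/(1+\mu)$, $y=y_0+\mu/2$ (the paper minimizes $\|(x,y)-(x_0,y_0)\|^2$ without the $\tfrac12$, which resolves your convention worry), then complementary slackness yields the cubic, and uniqueness on $[\max\{0,-2y_0\},\infty)$ is obtained by showing the cubic is strictly increasing there with a sign change. Your monotonicity argument via the factored form $\phi(\mu)=(y_0+\mu/2)(1+\mu)^2-\|x_0\|^2$ is in fact slightly cleaner than the paper's explicit rearrangement of $h'(\mu)$, but the structure of the proof is the same.
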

\begin{proof}
The case of $x_0^Tx_0\leq y_0$ is trivial.
So, we consider the case that  $x_0^Tx_0> y_0$. The projection is defined as the solution to the (strongly convex) optimization problem
\begin{equation} \label{opt:proj}
\begin{array}{c@{\quad}l}
\displaystyle\min_{x\in \mathbb{R}^n, y\in \mathbb{R}} & \displaystyle \norm{(x,y)-(x_0,y_0)}^2 \\
\noalign{\smallskip}
\mbox{subject to} & \displaystyle \norm{x}^2\leq y.
\end{array}
\end{equation}
The KKT optimality condition of problem~\eqref{opt:proj} can be written as
\begin{eqnarray}
&&2(x-x_{0})+2\mu x=0, \label{PKKT1}\\
&&2y-2y_0-\mu=0,\label{PKKT2}\\
&&\mu(\norm{x}^2-y)=0,\notag\label{PSC}\\
&&\norm{x}^2\leq y,\notag\\
&&\mu\geq0.\notag
\end{eqnarray}
We have $x=\frac{x_0}{1+\mu}$ and $y=y_0+\frac{\mu}{2}$ from (\ref{PKKT1}) and (\ref{PKKT2}), respectively.
Suppose that $\mu=0$. The  optimality condition reduces to $x=x_0$ and $y=y_0$, which contradicts to the constraint $\norm{x}^2>y$ of problem~\eqref{opt:proj}. Therefore, we have
$\mu>0$ and hence $\norm{x}^2=y$ by complementary slackness. This leads to the univariate cubic equation
\begin{equation*}
\left( \frac{x_0}{1+\mu} \right)^T\frac{x_0}{1+\mu}=y_0+\frac{\mu}{2} ,
\end{equation*}
which is equivalent to \eqref{eq:cubic} and implies, in particular, that $2y_0+\mu\ge0$.
Define $$h(\mu)=\frac{1}{2}\mu^3+(y_0+1)\mu^2+ \left( 2y_0+\frac{1}{2} \right)\mu-x_0^Tx_0+y_0 .$$
Since $2y_0+\mu\ge0$ and $\mu\ge0$, the derivative $h'$ satisfies
$$h'(\mu)=\frac{3}{2}\mu^2+2(y_0+1)\mu+\left( 2y_0+\frac{1}{2} \right)=\frac{1}{2}\mu^2+(2y_0+\mu)\mu+(2y_0+2\mu)+\frac{1}{2}\ge \frac{1}{2}.$$
Hence  $h(\mu)$ is strictly increasing on $[\max\{0,-2y_0\}, \infty)$.
Observing that  $h(0)=y_0-x_0^Tx_0<0$, $h(-2y_0)=-x_0^Tx_0<0$ and $h(+\infty)=+\infty$, there exists exactly one root in the interval $[\max\{0,-2y_0\},\infty)$.
Denote the solution of equation $h(\mu)=0$ in this interval by $\mu^*$. Then, we have $$x=\frac{x_0}{1+\mu^{*}}\quad \text{and} \quad y=y_0+\frac{\mu^{*}}{2}, $$ which completes the proof.
\end{proof}
In practice, to find a root of the cubic equation \eqref{eq:cubic} in the interval  $[\max\{0,-2y_0\},\infty)$, we use a hybrid method obtained by combining the bisection method and the Newton's method.
Numerically, our hybrid method is faster and more stable than the function \texttt{roots} in MATLAB.
The projection can be done in runtime $O(n)$ as formulating the cubic equation cost $O(n)$ and solving the univariate cubic equation costs $O(1)$.

\section{Complexity to achieve an $\epsilon$-optimal solution of \eqref{opt:CRS}}

\subsection{Another Equivalent Convex Reformulation}
To achieve a theoretical complexity for solving convex composite optimization problem~\eqref{opt:con_com} with first-order methods such as APG~\cite{nesterov1983method}, the function $g$ is often required to have a Lipschitz continuous gradient on its domain ${\rm dom}(g)$,   i.e., there exists a constant $L>0$ such that
\begin{equation*}
\label{eq:lip}
\|\nabla g(x)-\nabla g(y)\| \le L\|x-y\| ,~\forall x,y\in {\rm dom}(g).
\end{equation*}
However, one can easily check that the gradient $\nabla f_2$ of the objective $f_2$ of \eqref{opt:CP} is not Lipschitz continuous at those points $(x,y)$ with $y = 0$.
To remedy this, instead of \eqref{opt:CP}, we consider the following problem, which ensures $y$ is bounded below from 0 by imposing an extra constrain $y \ge l$:
\begin{equation} \label{opt:BCP}\tag{BCP}
\begin{array}{c@{\quad}l}
\displaystyle\min_{x\in \mathbb{R}^n, y\in \mathbb{R}} & \displaystyle f_2(x,y) \\
\noalign{\smallskip}
\mbox{subject to} & \displaystyle \norm{x}^2\leq y,\ y\ge l,
\end{array}
\end{equation}
%
%
where $l=\lambda_1^2/\rho^2$. To justify the choice of the lower bound $l$ in \eqref{opt:BCP}, we note that the function $\frac{\rho}{3}y^\frac{3}{2}+\frac{\lambda_1}{2}y$ is decreasing when $\sqrt y \le -\lambda_1/\rho$. Therefore, any optimal solution $(\tilde{x}, \tilde{y})$ of \eqref{opt:CP} must satisfy $\tilde{y} \ge (-\lambda_1/\rho)^2 = l$, and hence problem \eqref{opt:BCP} has the same objective value and optimal solutions as problem~\eqref{opt:CP}.


Problem~\eqref{opt:BCP} is again in the form of a convex composite minimization problem~\eqref{opt:con_com}. Denote by $B=\{(x,y)\in\R^n\times \R: \norm{x}^2\le y,y\ge l\}$ the feasible region of problem~\eqref{opt:BCP}.
The next theorem shows that the projection $\Pi_B$ onto the feasible region $B$ is again easily computable.

\begin{thm}
\label{thm:pj2B}
For any point $(x_0, y_0) \in \R^n \times \R$,
the projection $\Pi_B(x_0,y_{0})$ is given by
\[
\Pi_B(x_0,y_{0})=\left\{\begin{array}{ll}(x_1,y_1)&\text{if $ y_1\ge l$},\\ (x_0,l)&\text{if $y_1<l$ and $\|x_0\|<\sqrt l$,}\\
(\sqrt l x_0/\|x_0\|,l) &\text{otherwise,}\end{array}\right.
\]
where $(x_1,y_1)=\Pi_S(x_0,y_0)$.
\end{thm}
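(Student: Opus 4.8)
The plan is to reduce the projection onto $B = \{(x,y) : \|x\|^2 \le y,\ y \ge l\}$ to the already-established projection onto $S = \{(x,y) : \|x\|^2 \le y\}$ from Theorem~\ref{thm:pj2S}, by a case analysis driven by which constraints are active at the optimum. First I would observe that $B = S \cap H$, where $H = \{(x,y) : y \ge l\}$ is a halfspace, and that both $S$ and $H$ are closed convex sets, so $\Pi_B$ is well-defined and unique. Set $(x_1,y_1) = \Pi_S(x_0,y_0)$. If $y_1 \ge l$, then $(x_1,y_1) \in B$; since it is the closest point of $S$ to $(x_0,y_0)$ and $B \subseteq S$, it is a fortiori the closest point of $B$, giving the first case immediately.

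The substance is the case $y_1 < l$. Here I would argue that the optimal $(x^*,y^*)$ must lie on the hyperplane $y = l$. One clean way: consider the restricted problem $\min \{\|(x,y)-(x_0,y_0)\|^2 : \|x\|^2 \le y,\ y = l\}$, i.e. projection onto $\{(x,l) : \|x\|^2 \le l\}$, which splits as $y^* = l$ together with projecting $x_0$ onto the Euclidean ball $\{x : \|x\| \le \sqrt l\}$; the latter is the standard ball projection, namely $x^* = x_0$ if $\|x_0\| \le \sqrt l$ and $x^* = \sqrt l\, x_0/\|x_0\|$ otherwise. This produces exactly the second and third cases of the claimed formula, so it remains only to show that this restricted optimum is globally optimal for the projection onto $B$. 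For that I would write the KKT system for projection onto $B$ — essentially \eqref{PKKT1}–\eqref{PKKT2} augmented with a multiplier $\nu \ge 0$ for $y \ge l$ and complementary slackness $\nu(l - y) = 0$ — and check that the candidate point satisfies it; alternatively, and perhaps more transparently, invoke the obtuse-angle (variational inequality) characterization of projection onto a convex set: $(x^*,y^*) = \Pi_B(x_0,y_0)$ iff $\langle (x_0,y_0) - (x^*,y^*), (x,y) - (x^*,y^*)\rangle \le 0$ for all $(x,y) \in B$, and verify this inequality for the candidate, using that $y_1 < l$ forces the $y$-component of $(x_0,y_0) - (x^*,y^*)$ to be strictly negative (so moving up in $y$ only helps) and that the $x$-component is a nonnegative multiple of $x^*$ (the ball-projection optimality).

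The step I expect to be the main obstacle is rigorously justifying that $y^* = l$ in the second/third cases — i.e. ruling out that the projection onto $B$ sits strictly inside the halfspace $H$ with $y^* > l$ but strictly inside $S$ as well, or on the boundary of $S$ at a height above $l$. Intuitively, $y_1 < l$ means the free $S$-projection "wants" $y$ smaller than $l$, so pushing $y$ up to exactly $l$ (and no further) is optimal, but turning this into a proof requires either the KKT bookkeeping above or a short monotonicity argument: fixing any feasible $x$ with $\|x\|^2 \le l$, the function $y \mapsto (y - y_0)^2$ restricted to $y \ge \max\{l, \|x\|^2\}$ is minimized at the left endpoint when $y_0 < l$, and one shows $y_0 < l$ is forced whenever $y_1 < l$ (since $y_1 = \max\{y_0, \cdot\} \ge y_0$ would otherwise be $\ge l$... more precisely, from Theorem~\ref{thm:pj2S}, $y_1 = y_0$ when $\|x_0\|^2 \le y_0$ and $y_1 = y_0 + \mu^*/2 > y_0$ otherwise, and a brief check shows $y_1 < l$ entails $y_0 < l$ in both subcases). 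Once $y^* = l$ is pinned down, the remaining split between the two subcases is just the textbook Euclidean ball projection, which I would state without belaboring.
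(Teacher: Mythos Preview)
Your proposal is correct and follows essentially the same case analysis as the paper: first dispose of $y_1 \ge l$ by the inclusion $B \subseteq S$, then show $y^* = l$ when $y_1 < l$ and reduce to the standard projection onto the ball $\{x : \|x\| \le \sqrt l\}$. The only difference is at the step you flag as the main obstacle: instead of KKT bookkeeping, a variational-inequality check, or your monotonicity sketch, the paper dispatches $y^* = l$ in one line by contradiction --- if the projection $(x_2,y_2)$ onto $B$ had $y_2 > l$, the constraint $y \ge l$ would be slack, so $(x_2,y_2)$ would also be the (unique) projection onto $S$, i.e.\ $(x_2,y_2) = (x_1,y_1)$, contradicting $y_1 < l$.
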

\begin{proof}
Let $(x_2,y_2)$ be the projection of $(x_0,y_0)$ onto $B$.
If $y_1\ge l$, then $(x_1,y_1) =\Pi_S(x_0,y_0)$ is the solution to the
problem
\begin{equation*}
\begin{array}{c@{\quad}l}
\displaystyle\min_{x\in \mathbb{R}^n, y\in \mathbb{R}} & \displaystyle \norm{(x,y)-(x_0,y_0)}^2 \\
\noalign{\smallskip}
\mbox{subject to} & \displaystyle \norm{x}^2\leq y,\ y\ge l.
\end{array}
\end{equation*}
Next, we consider the case of $y_1<l$. In this case, we must have $y_2=l$ since otherwise $(x_2,y_2)$ is also the projection of $(x_0,y_0)$ onto $S$, which contradicts with $y_1<l$. Hence, $x_2$ is actually the solution to the problem
\begin{equation*}
\begin{array}{c@{\quad}l}
\displaystyle\min_{x\in \mathbb{R}^n} & \displaystyle \norm{x -x_0}^2 \\
\noalign{\smallskip}
\mbox{subject to} & \displaystyle \norm{x}^2\leq  l.
\end{array}
\end{equation*}
We thus have the following two implications: if $\|x_0\|< \sqrt l$, then $x_{2}=x_0$; and if $\|x_0\|\ge \sqrt l$, then $x_{2}=\sqrt lx_0/\|x_0\|$. This completes the proof.
\end{proof}
\noindent For Theorem~\ref{thm:pj2B}, the projection onto $B$ is as cheap as the projection onto $S$ because the former costs at most two more scalar comparisons, which are negligible, than
the latter (note that $\|x_0\|$ is already computed in the computation of the projection onto $S$).

\subsection{A Surrogate Problem}
When the dimension $n$ is high, the exact computation of the minimum eigenvalue is prohibitively expensive, if not impossible. For computational efficiency, an approximate eigenvalue is preferred when only an approximate solution of \eqref{opt:CRS} is needed, which is often the case in practice.
When an approximate minimum eigenvalue $\theta \approx \lambda_1$ is used in the problem~\eqref{opt:BCP}, the objective $\frac{1}{2} x^T(A-\theta I) x+b^Tx+\frac{\rho}{3}y^\frac{3}{2}+\frac{\theta}{2}y$ could be non-convex. Therefore, we need to slightly modify the problem~\eqref{opt:BCP}.
Let the approximate minimum eigenvalue $\theta$ satisfies $\lambda_1\le\theta\le \lambda_1+\epsilon$ and define $\eta :=-\theta+\epsilon+\lambda_1\ge0.$
Noting that $-\theta+\epsilon=-\lambda_1+\eta$ (we will frequently use this equality in subsequent analysis), we obtain the following problem as a surrogate problem to \eqref{opt:CRS}:
\begin{equation} \label{opt:BCP_p}\tag{SP}
\begin{array}{c@{\quad}l}
\displaystyle\min_{x\in \mathbb{R}^n, y\in \mathbb{R}} & \displaystyle f_3(x,y) := \frac{1}{2} x^T\left(A+(-\theta+\epsilon)I\right) x+b^Tx+\frac{\rho}{3}y^\frac{3}{2}-\frac{-\theta+\epsilon}{2}y \\
\noalign{\smallskip}
\mbox{subject to} & \displaystyle \norm{x}^2\leq y,\ y\ge \hat{l},
\end{array}
\end{equation}
where $\hat{l}=(-\theta+\epsilon)^2/\rho^2$. To justify the lower bound $\hat{l}$ for $y$, we note that $\frac{\rho}{3}y^\frac{3}{2}-\frac{-\theta+\epsilon}{2}y$ is decreasing when $y\le \hat{l}$, and hence $\hat{l}$ is a lower bound for any optimal $y$. From now on, we denote by $\hat{B}:=\{(x,y): \norm{x}^2\leq y,~y\ge \hat{l}\}$ and $(x^\eta,y^\eta)$ the feasible region and an optimal solution to \eqref{opt:BCP_p}, respectively. By Theorem~\ref{thm:pj2B}, the feasible region $\hat{B}$ admits an easily computable projection.

Our theoretical convergence rate of solving problem~\eqref{opt:CRS} is based on the surrogate problem~\eqref{opt:BCP_p}. Specifically, we shall specialize the backtracking line search version of APG~\cite{beck2009fast} to problem~\eqref{opt:BCP_p} (see Algorithm~\ref{alg:APG}) and show in Theorem~\ref{thm:err} below that the sequence of iterates converges sublinearly to an optimal solution of problem~\eqref{opt:BCP} (which is also an optimal solution to problem~\eqref{opt:CP}). In view of Theorem~\ref{thm:cvreform}, a convergence rate for solving \eqref{opt:CRS} is thus obtained.
It should be pointed out that, unlike the original APG, we reset the final solution returned by APG (in Lines 8--12 of Algorithm~\ref{alg:APG}) to achieve an equal or smaller objective value  (see the proof in Theorem \ref{thm:err}).

\textbf{Remark:} If we directly use the approximate minimum eigenvalue $\theta$ to replace the exact minimum eigenvalue $\lambda_1$ in \eqref{opt:BCP},  we get the following  problem:
\begin{equation} \label{opt:AP}\tag{AP}
\begin{array}{c@{\quad}l}
\displaystyle\min_{x\in \mathbb{R}^n, y\in \mathbb{R}} & \displaystyle \frac{1}{2} x^T(A-\theta I) x+b^Tx+\frac{\rho}{3}y^\frac{3}{2}+\frac{\theta}{2}y \\
\noalign{\smallskip}
\mbox{subject to} & \displaystyle \norm{x}^2\leq y,\ y\ge l,
\end{array}
\end{equation}
In Appendix~\ref{app:b}, we show that solving \eqref{opt:AP} yields an approximate optimal solution to \eqref{opt:CRS} if $\epsilon$ is sufficiently small, i.e., the eigenvalue computation is sufficiently accurate.
We also show in Appendix~\ref{app:b} that either all the stationary points, which are approximate optimal solutions  of \eqref{opt:AP}, share the same objective value,  or there is a unique  stationary point that is the optimal solution of \eqref{opt:AP} if $-\theta>\bar\lambda$, where $\bar{\lambda}$ is some constant such that $\bar\lambda<-\lambda_1$.
Note that when $\epsilon\le-\lambda_1-\bar\lambda$, we always have that $\theta<\lambda_1+\epsilon<-\bar\lambda$ and hence that $-\theta>\bar\lambda$.
However, the constant $\bar\lambda$ is unknown a priori and hence our formulation  \eqref{opt:AP} may have a non-optimal stationary point if we choose a $\theta$ that is not close enough to $\lambda_1$. This is why we focus on \eqref{opt:BCP_p} in this paper. Nevertheless,  we will  compare the empirical performance between \eqref{opt:BCP_p}  and \eqref{opt:AP} in the numerical section.
\begin{algorithm}[!t]
\begin{algorithmic}[1]
\caption{APG  for \eqref{opt:BCP_p} }
\label{alg:APG}
\Require $f_3$, $\nabla f_3$, $L_0>0,\xi>1$, $\epsilon>0$, $\theta<0$, $x_0 \in \R^n$ and $y_0\in \R$.
\State choose $\beta_1=\alpha_0 = (x_0^T,y_0)^T$ and $t_1=1$
\For {$k=1,2,...,k_{\max}$}
\State find the smallest non-negative integer $i_k$ such that  $\bar L=\xi^{i_k}L_{k-1}$  and
\[ f_3(\alpha_k)\ge  f_3(\beta_k)+\nabla  f_3(\beta_{k})^T(\alpha_k-\beta_k)+\frac{\bar L}{2}\|\alpha_k-\beta_k\|^2, \] where $\alpha_k=\Pi_{\hat{B}}(\beta_k-\frac{1}{\bar L}\nabla f_3(\beta_k))$
\State set $L_k=\xi^{i_k}L_{k-1}$
\State compute $t_{k+1}=\frac{1+\sqrt{1+4t_k^2}}{2}$
\State compute $\beta_{k+1}=\alpha_k+\left(\frac{t_k-1}{t_{k+1}}\right)(\alpha_k-\alpha_{k-1})$
\EndFor
\If {$\alpha_k(n+1)>\|\alpha_k(1:n)\|^2$ and $\sqrt{\alpha_k(n+1)}>(-\theta+\epsilon)/\rho$}
\State set $x_k=\alpha_k(1:n)$ and $y_{k}=\max\{\|\alpha_k(1:n)\|^2, (-\theta+\epsilon)^{2}/\rho^{2}\}$
\Else
\State set $(x_k^T,y_k)^T=\alpha_k$
\EndIf
\end{algorithmic}
\end{algorithm}

\subsubsection{Approximate Computation of Eigenpairs}
To obtain an approximate eigenpair, we recall the Lanczos method for approximately finding the minimum eigenvalue and its associated eigenvector \cite{golub2012matrix}.
The Lanczos method achieves a fast complexity bound for eigenvalue computation \cite{kuczynski1992estimating} and is an important component for proving complexity bounds for non-convex unconstrained optimization in the literature \cite{agarwal2017finding,carmon2018accelerated,royer2018complexity}. The specific result on the Lanczos method we need is the following lemma.
\begin{lem}[\cite{kuczynski1992estimating} and Lemma 9 in \cite{royer2018complexity}]
\label{lem:lanczos}
Let $H$ be a symmetric matrix satisfying $\|H\|_2\le U_{H}$ for some $U_{H}>0$, where $\|\cdot \|_2$ denotes the operator 2-norm of a matrix, and $\lambda_1$ its minimum eingenvalue.
Suppose that the Lanczos procedure is applied to find the largest eigenvalue of $ U_{H}I-H$
starting at a random vector distributed uniformly over the unit sphere. Then, for any
$\epsilon>0$ and $\delta\in(0, 1)$, there is a probability at least $1-\delta$ that the procedure outputs a
unit vector $v$ such that
$v^THv\le\lambda_1 +\epsilon$
in at most
$\min\left\{n,\frac{\log(n/\delta^2)}{2\sqrt2}\sqrt{\frac{ U_{H}}{\epsilon}}\right\}$
iterations.
\end{lem}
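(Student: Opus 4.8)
The plan is to reduce the statement to a deterministic polynomial‑approximation estimate for the Krylov/Lanczos process together with a one‑dimensional anti‑concentration bound for the random start. First I would pass to $M:=U_HI-H$, which is positive semidefinite with largest eigenvalue $\mu_1=U_H-\lambda_1$ and $\|M\|_2\le 2U_H$; since $v^THv=U_H-v^TMv$ for unit $v$, the target $v^THv\le\lambda_1+\epsilon$ is exactly $v^TMv\ge\mu_1-\epsilon$. By the variational description of Lanczos, after $k$ steps the method returns a unit vector $v$ in the Krylov space $\mathcal K_k(M,q)=\mathrm{span}\{q,Mq,\dots,M^{k-1}q\}$ that maximizes the Rayleigh quotient there, so $v^TMv\ge (p(M)q)^TM\,p(M)q/\|p(M)q\|^2$ for every polynomial $p$ of degree $\le k-1$. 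Moreover, if no breakdown occurs then $\mathcal K_n(M,q)=\R^n$, forcing $v^TMv=\mu_1$ exactly; this accounts for the $\min\{n,\cdot\}$ in the bound and lets us assume henceforth $k\le n$ and no breakdown (a breakdown only means an invariant subspace already containing a top eigenvector has been found, which can only help).

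Next I would estimate the Rayleigh quotient through a good choice of $p$. Writing $q=\sum_i c_i u_i$ in an orthonormal eigenbasis of $M$ with eigenvalues $\mu_1\ge\cdots\ge\mu_n\ge 0$,
\[
\mu_1-v^TMv\ \le\ \frac{\sum_i(\mu_1-\mu_i)\,p(\mu_i)^2 c_i^2}{\sum_i p(\mu_i)^2 c_i^2}.
\]
Splitting the indices into $J=\{i:\mu_1-\mu_i\le\epsilon\}$ (which contains $1$) and its complement, the $J$‑part of the numerator is $\le\epsilon\sum_i p(\mu_i)^2 c_i^2$, and the complementary part, after keeping only the $i=1$ term in the denominator and using $\mu_1-\mu_i\le 2U_H$, is $\le 2U_H\,c_1^{-2}p(\mu_1)^{-2}\max_{t\in[0,\mu_1-\epsilon]}p(t)^2$. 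Taking $p$ to be the degree‑$(k-1)$ Chebyshev polynomial affinely rescaled so that $[0,\mu_1-\epsilon]$ maps onto $[-1,1]$ makes the last maximum equal to $1$ and $p(\mu_1)=T_{k-1}\!\left(1+\tfrac{2\epsilon}{\mu_1-\epsilon}\right)$; together with $\mu_1\le 2U_H$ and the standard growth estimate $T_m(1+x)\ge\tfrac12 e^{m\sqrt{2x}}$ this yields
\[
\mu_1-v^TMv\ \le\ \epsilon+c_1^{-2}\,8U_H\,\exp\!\bigl(-\Omega(k\sqrt{\epsilon/U_H})\bigr),
\]
the case $\mu_1\le\epsilon$ being trivial.

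Finally I would control $c_1=u_1^Tq$ probabilistically: for $q$ uniform on the unit sphere of $\R^n$, $c_1^2$ follows a $\mathrm{Beta}(\tfrac12,\tfrac{n-1}{2})$ distribution, so $\Pr(c_1^2\le s)\le\sqrt{ns}$ and hence $c_1^2\ge\delta^2/n$ with probability at least $1-\delta$. Substituting this and requiring the exponential term to be dominated by the first error term forces $k\sqrt{\epsilon/U_H}=\Omega(\log(n/(\epsilon\delta^2)))$, i.e.\ $k=O\!\bigl(\sqrt{U_H/\epsilon}\,\log(n/\delta^2)\bigr)$; intersecting with the deterministic $n$‑step bound then gives the claimed iteration count.

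The step I expect to be the main obstacle is this last one, namely extracting the \emph{clean} form of the bound: one must verify that the prefactor $8U_H/c_1^2$ contributes only $\log(n/\delta^2)$ inside the logarithm while the $U_H/\epsilon$ dependence stays confined to the factor $\sqrt{U_H/\epsilon}$ outside, and that the specific constant $1/(2\sqrt2)$ emerges from optimizing the Chebyshev degree against the splitting threshold $\epsilon$ (this is precisely the delicate part of the analysis in \cite{kuczynski1992estimating}). A secondary point to pin down is that the \emph{actual} Lanczos recursion—not just the idealized Rayleigh‑quotient maximizer over $\mathcal K_k$—attains this value, and that partial breakdowns are harmless.
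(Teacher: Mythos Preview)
The paper does not prove this lemma; it is quoted from \cite{kuczynski1992estimating} and \cite[Lemma~9]{royer2018complexity} and used only as a black box in the operation count of Theorem~\ref{thm:err}. There is therefore nothing in the present paper to compare your sketch against---you are reconstructing the argument of the cited references.

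Your outline is the right one and matches those references in structure: pass to the positive semidefinite matrix $M=U_HI-H$, use the variational (Kaniel--Paige) description of the top Lanczos Ritz value over the Krylov space $\mathcal K_k(M,q)$, insert a shifted Chebyshev polynomial of degree $k-1$, and control $(u_1^Tq)^2$ for a uniform random start via its $\mathrm{Beta}\bigl(\tfrac12,\tfrac{n-1}{2}\bigr)$ law. Your remarks on the deterministic $n$-step termination and on the harmlessness of breakdowns are also correct. The only place where your sketch stops short of the \emph{stated} bound is precisely the one you flag: your additive split at level $\epsilon$ followed by ``make the second term $\le\epsilon$'' leaves an extra $\log(U_H/\epsilon)$ inside the logarithm, yielding $k=O\bigl(\sqrt{U_H/\epsilon}\,\log(nU_H/(\delta^2\epsilon))\bigr)$ rather than the clean $\log(n/\delta^2)$. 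Removing that factor and extracting the constant $1/(2\sqrt2)$ is exactly the content of the sharp probabilistic estimate in \cite{kuczynski1992estimating}: they bound $\Pr\bigl[(\mu_1-v^TMv)/\mu_1>\epsilon'\bigr]\le 1.648\sqrt n\,e^{-(2k-1)\sqrt{\epsilon'}}$ for normalized matrices by a direct sphere-integral computation rather than by first conditioning on $c_1$; one then applies this with $\epsilon'=\epsilon/(2U_H)$, so that $U_H/\epsilon$ enters only through the square root in the exponent, and solving for $k$ gives $\tfrac{1}{2\sqrt2}\sqrt{U_H/\epsilon}\,\log(n/\delta^2)$ up to an absolute constant inside the logarithm. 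For the purposes of the present paper, which only ever invokes the lemma inside an $\tilde O(\cdot)$ complexity statement, the bound your argument already delivers would in fact suffice.
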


\subsubsection{Convergence Rate of APG for \eqref{opt:BCP_p}}
We first collect some basic properties of APG.
\begin{lem}[\cite{nesterov1983method,beck2009fast}]
\label{lem:APG}
Consider a function $G(x)=g(x)+h(x)$, where
$g$ is continuously differentiable, convex function with the gradient $\nabla g$ being $L$-Lipschitz continuous on its domain ${\rm dom}(g)$ and
 $h$ is a proper, closed, and convex function that can possibly be non-smooth. Let $\{x_k\}_{k=1}^\infty$ be the sequence generated by APG. Then, we have
$$G(x_k)-G^*\le \frac{2\xi L\|x^*-x_0\|^2}{(k+1)^2},$$ where $x^*$ is an optimal solution and $G^*$ is the optimal value of $G(x)$. Equivalently,
in order to guarantee $G(x_k)-G^*\le\epsilon$, we need at most
$k=\sqrt{2\xi L}\|x^*-x_0\| \epsilon^{-1/2}-1$ iterations.
\end{lem}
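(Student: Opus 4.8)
The plan is to follow the classical analysis of the accelerated proximal gradient method (FISTA) of Beck and Teboulle, adapted to the backtracking line search used in Algorithm~\ref{alg:APG}; since the statement is quoted verbatim from \cite{nesterov1983method,beck2009fast}, what follows merely indicates the route one would take to make it self-contained.

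The cornerstone is a single prox--descent inequality. Fix an accepted step-size parameter $\bar L$, and let $x=\mathrm{prox}_{h/\bar L}\big(y-\tfrac{1}{\bar L}\nabla g(y)\big)$ be the proximal step from a point $y$. Combining the line-search acceptance condition $G(x)\le g(y)+\langle\nabla g(y),x-y\rangle+\tfrac{\bar L}{2}\|x-y\|^2+h(x)$, convexity of $g$, and the subgradient inclusion $\bar L(y-x)-\nabla g(y)\in\partial h(x)$ that expresses optimality of the prox step, one obtains, for every $z\in\mathrm{dom}(h)$,
\begin{equation*}
G(z)-G(x)\ \ge\ \frac{\bar L}{2}\|x-y\|^2+\bar L\,\langle y-z,\,x-y\rangle .
\end{equation*}
The backtracking rule guarantees that every accepted $\bar L$ is at most $\xi L$ and that the accepted values form a non-decreasing sequence; the bound $\bar L\le\xi L$ is precisely what produces the factor $\xi$ in the final rate.

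Next I would instantiate this inequality at the step that produces $x_{k+1}$ (so $y=\beta_{k+1}$, parameter $\bar L_{k+1}$), once with $z=x_k$ and once with $z=x^*$. Writing $v_k:=G(x_k)-G^*\ge0$ and $u_k:=t_kx_k-(t_k-1)x_{k-1}-x^*$, I would scale the first copy by $t_{k+1}-1$, add the second, and multiply through by $t_{k+1}$. Using the defining relation $t_{k+1}^2-t_{k+1}=t_k^2$ and the momentum update $\beta_{k+1}=x_k+\tfrac{t_k-1}{t_{k+1}}(x_k-x_{k-1})$, the inner-product terms collapse, after completing the square, into a difference of squared norms, giving
\begin{equation*}
\frac{2}{\bar L_{k+1}}\big(t_k^2 v_k-t_{k+1}^2 v_{k+1}\big)\ \ge\ \|u_{k+1}\|^2-\|u_k\|^2 .
\end{equation*}
Since $\bar L_{k+1}\ge\bar L_k$ and $t_k^2 v_k\ge0$, this implies that $a_k:=\tfrac{2}{\bar L_k}t_k^2 v_k+\|u_k\|^2$ is non-increasing in $k$. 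I expect the bookkeeping in this combine-and-telescope step --- aligning the two scaled inequalities and recognizing $u_k$ and $u_{k+1}$ after substituting the momentum update --- to be the only genuinely delicate part of the argument.

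Finally I would close the loop. Applying the prox--descent inequality to the first step (where $y=\beta_1=x_0$, $x=x_1$) with $z=x^*$ gives $v_1\le\tfrac{\bar L_1}{2}\big(\|x_0-x^*\|^2-\|x_1-x^*\|^2\big)$; since $t_1=1$ forces $u_1=x_1-x^*$, this yields $a_1\le\|x_0-x^*\|^2$. Combined with the monotonicity of $a_k$, we get $\tfrac{2}{\bar L_k}t_k^2 v_k\le a_k\le\|x_0-x^*\|^2$, hence $v_k\le\tfrac{\bar L_k}{2t_k^2}\|x^*-x_0\|^2\le\tfrac{\xi L}{2t_k^2}\|x^*-x_0\|^2$. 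A short induction on $t_{k+1}=\tfrac{1+\sqrt{1+4t_k^2}}{2}$ with base case $t_1=1$ gives $t_k\ge(k+1)/2$, so $v_k\le\tfrac{2\xi L\|x^*-x_0\|^2}{(k+1)^2}$, the claimed rate. Setting the right-hand side equal to $\epsilon$ and solving for $k$ gives the iteration count $k=\sqrt{2\xi L}\,\|x^*-x_0\|\,\epsilon^{-1/2}-1$.
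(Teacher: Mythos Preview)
The paper does not supply its own proof of this lemma; it is quoted as a known result from \cite{nesterov1983method,beck2009fast} and used as a black box in the complexity analysis that follows. Your sketch is the standard Beck--Teboulle FISTA argument from \cite{beck2009fast} (prox--descent inequality, the $t_k^2$-weighted Lyapunov telescoping via $t_{k+1}^2-t_{k+1}=t_k^2$, and the growth bound $t_k\ge (k+1)/2$), and it is correct as outlined, so there is nothing to compare against and nothing to fix.
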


Restricting the objective function $f_3$ in \eqref{opt:BCP_p} to the set $\hat{B}$, the gradient $\nabla f_3$ is then $\gamma$-Lipschitz continuous, where
\begin{equation}\label{eq:gamma}
\gamma=\max\left\lbrace\|A+( \epsilon-\theta ) I\|_2,\frac{\rho}{4\sqrt{ \hat{l} }}\right\rbrace .
\end{equation}
Applying Lemma~\ref{lem:APG} to problem \eqref{opt:BCP_p} with $g = f_3$ and $h = \iota_{\hat{B}}$, we obtain that $$f_3(x_k,y_k) - f (x^\eta, y^\eta) \le\epsilon$$
after at most  $k=\sqrt{2\xi \gamma}\sqrt{\|x^\eta-x_0\|^2+ (y^\eta - y_0 )^2 }\epsilon^{-1/2}-1$ iterations.

The next theorem shows that with probability at least $1 - \delta$, our algorithm returns an $\epsilon$-approximate optimal solution to problem~\eqref{opt:CRS} using at most $O(\epsilon^{-1/2}\log(n/\delta))$ operations (including those in the approximate eigenpiar computation and the APG).


\begin{thm}\label{thm:err}
Let $X^*$ be the optimal solution set, $(x^\eta,y^\eta)$ be any optimal solution to problem \eqref{opt:BCP_p}, $R=\inf_{(x, y) \in X^*}\|(x, y) - (x_0,y_0)\|$ the initial distance to the optimal solution, $(x^*,y^*)$ an optimal solution to problem~\eqref{opt:BCP} with $\|x^*\|^2=y^*$ (which always exists) and $(x_{k},y_{k})$ the solution returned by Algorithm~\ref{alg:APG}, where $k\ge\sqrt{2\xi\gamma}R\epsilon^{-1/2}-1$ and $\gamma$ is as defined in \eqref{eq:gamma}. Define
$$\tilde x=
\left\{\begin{array}{ll}x_{k}, &\text{if }\|x_{k}\|^2=y_k,\\
x_k+tv, &\text{otherwise,}\end{array}\right.$$
 where $v$ is an approximate eigenvector that satisfies $v^TAv\le\lambda_1+\epsilon$ and $\|v\|=1$, and $t$ is chosen such that $t(v^TAx_k+b^Tv+(-\lambda_1+\eta )x_k^Tv)\le0$ and $\|x_k+tv\|^2=y_k$ (which also always exists).
Then, we have  $$f_{1}(\tilde x)-f_1(x^*)\leq\epsilon+({-\lambda_1+\eta)^{2}}\epsilon/{\rho}^2=O(\epsilon),$$ where $f_1$ is the objective function in \eqref{opt:CRS}.
 Furthermore, when the approximate eigenpair is computed by the Lanczos method, the output is correct with probability at least $1-\delta$ and the total number of matrix-vector products is at most $$\sqrt{2\xi\gamma}R\epsilon^{-1/2}-1+\frac{\log(n/\delta^2)}{2\sqrt2}\sqrt{\frac{ \|A\|_2}{\epsilon}}=O(\epsilon^{-1/2}\log(n/\delta)). $$
\end{thm}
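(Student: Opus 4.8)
The plan is to separate the statement into three pieces: (i) the objective-value bound $f_1(\tilde x) - f_1(x^*) \le \epsilon + (-\lambda_1+\eta)^2 \epsilon/\rho^2$; (ii) the existence claims for $t$ (and for the optimal $(x^*, y^*)$ with $\|x^*\|^2 = y^*$); and (iii) the complexity count. For (iii), once the APG iteration bound from Lemma~\ref{lem:APG} and the Lanczos bound from Lemma~\ref{lem:lanczos} are in hand, this is just addition; each APG iteration uses one matrix-vector product (plus $O(n)$ work for the projection via Theorem~\ref{thm:pj2S}/\ref{thm:pj2B}), so the total is $\sqrt{2\xi\gamma}R\epsilon^{-1/2}-1$ from APG plus $\frac{\log(n/\delta^2)}{2\sqrt 2}\sqrt{\|A\|_2/\epsilon}$ from Lanczos, which is $O(\epsilon^{-1/2}\log(n/\delta))$, and the correctness holds with probability $\ge 1-\delta$ by Lemma~\ref{lem:lanczos}. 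The bulk of the work is (i).

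For (i), I would argue in two layers. First layer: relate the value of the surrogate objective $f_3$ at $(x_k, y_k)$ to the value of $f_2$ at an optimal solution $(x^*, y^*)$ of \eqref{opt:BCP}. By Lemma~\ref{lem:APG} applied to $g = f_3$, $h = \iota_{\hat B}$, after $k \ge \sqrt{2\xi\gamma}R\epsilon^{-1/2}-1$ iterations the raw APG iterate $\alpha_k$ satisfies $f_3(\alpha_k) - f_3(x^\eta, y^\eta) \le \epsilon$; the reset in Lines 8--12 of Algorithm~\ref{alg:APG} only decreases $f_3$ (this needs the same monotonicity observation used to justify $\hat l$: replacing $y$ by $\max\{\|x\|^2, \hat l\}$ cannot increase $\frac{\rho}{3}y^{3/2} - \frac{-\theta+\epsilon}{2}y$ on the relevant range), so $f_3(x_k, y_k) \le f_3(x^\eta, y^\eta) + \epsilon$. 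Then I need to compare $f_3(x^\eta, y^\eta)$, the optimal value of \eqref{opt:BCP_p}, with $f_2$ on a feasible point of \eqref{opt:BCP_p} built from $(x^*, y^*)$; since $f_3$ and $f_2$ differ only through replacing $\lambda_1$ by $\theta - \epsilon = \lambda_1 - \eta$, the difference $f_3(x, y) - f_2(x, y) = \frac{\eta}{2}(\|x\|^2 - y) \le 0$ on $\hat B$, while on the other side one gets a gap controlled by $\eta$ and the size of $(x^*, y^*)$; the lower bounds $\hat l$ vs.\ $l$ also shift slightly. Collecting these $\eta$-order terms is where I expect the constant $(-\lambda_1+\eta)^2/\rho^2$ to enter, via the bound $y^\eta \le$ something like $((-\lambda_1+\eta)/\rho)^2$ coming from the structure of optimal $y$ for the $y$-subproblem.

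Second layer: pass from the surrogate value bound to the genuine \eqref{opt:CRS} bound for $\tilde x$. Here $\tilde x$ is obtained from $(x_k, y_k)$ exactly as in Theorem~\ref{thm:cvreform}: if $\|x_k\|^2 = y_k$ then $\tilde x = x_k$; otherwise add a multiple of an approximate eigenvector $v$ to saturate the ball constraint, choosing the sign of $t$ so that the cross term $t(v^TAx_k + b^Tv + (-\lambda_1+\eta)x_k^Tv)$ is $\le 0$ — such a $t$ exists because the equation $\|x_k + tv\|^2 = y_k$ has two real roots of opposite-or-zero product structure (the constant term is $\|x_k\|^2 - y_k < 0$), and one can pick the root with the desired sign; this is the "always exists" parenthetical. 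The point of the sign choice is that when we expand $f_1(\tilde x)$, using $v^TAv \le \lambda_1 + \epsilon$ and $\|x_k+tv\|^2 = y_k$, we get $f_1(\tilde x) = \frac12 x_k^T A x_k + b^T x_k + \frac{\rho}{3}y_k^{3/2} + t(\text{cross term}) + \frac{t^2}{2}v^TAv \le f_3(x_k, y_k) + (\text{error from } v^TAv \le \lambda_1+\epsilon \text{ vs.\ } -\lambda_1+\eta) \le f_3(x_k, y_k) + O(\eta+\epsilon)\cdot t^2$, and $t^2 \le y_k$ is itself $O(((-\lambda_1+\eta)/\rho)^2)$. Combined with the first-layer bound and $f_1(x^*) = f_2(x^*, y^*) = $ Val\eqref{opt:CP}, the $\epsilon$ and $(-\lambda_1+\eta)^2\epsilon/\rho^2$ terms emerge; since $\eta \le \epsilon$ this is $O(\epsilon)$.

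The main obstacle is the bookkeeping of the $\eta$-dependent discrepancies in the second layer together with the first: one must track simultaneously the gap between $f_3$ and $f_2$ on the relevant feasible points, the mismatch between the lower bounds $\hat l$ and $l$, the bound $t^2 \le y_k \le O((-\lambda_1+\eta)^2/\rho^2)$, and the replacement of the true eigenvalue inequality $v^TAv \le \lambda_1+\epsilon$ by the shifted quantity $-\lambda_1+\eta$ appearing in $f_3$, and show they all collapse to exactly $\epsilon + (-\lambda_1+\eta)^2\epsilon/\rho^2$ rather than some looser $O(\epsilon)$ expression. A secondary technical point is verifying the Lipschitz constant $\gamma$ in \eqref{eq:gamma} is valid on $\hat B$ (the $\frac{\rho}{4\sqrt{\hat l}}$ term comes from the second derivative of $\frac{\rho}{3}y^{3/2}$ being $\frac{\rho}{4}y^{-1/2} \le \frac{\rho}{4\sqrt{\hat l}}$ for $y \ge \hat l$), which feeds the iteration count; this is routine but must be stated.
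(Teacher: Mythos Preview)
Your approach is essentially the same as the paper's, but you misplace where the extra constant $(-\lambda_1+\eta)^2/\rho^2$ enters and overcomplicate the first layer. In the paper, the first layer is completely clean: writing
\[
f_3(x_k,y_k)-f_2(x^*,y^*)=\bigl[f_3(x_k,y_k)-f_3(x^\eta,y^\eta)\bigr]+\bigl[f_3(x^\eta,y^\eta)-f_3(x^*,y^*)\bigr]+\bigl[f_3(x^*,y^*)-f_2(x^*,y^*)\bigr],
\]
the middle bracket is $\le 0$ by optimality of $(x^\eta,y^\eta)$ for \eqref{opt:BCP_p}, and the last bracket equals $\tfrac{\eta}{2}(\|x^*\|^2-y^*)=0$ precisely because you selected $(x^*,y^*)$ with $\|x^*\|^2=y^*$. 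So no $\eta$-order residue survives here; the first layer yields simply $f_3(x_k,y_k)-f_2(x^*,y^*)\le E_k\le\epsilon$, and the ``bookkeeping of the $\eta$-dependent discrepancies'' you flag as the main obstacle does not arise in this layer at all.

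The constant $(-\lambda_1+\eta)^2/\rho^2$ comes entirely from the second layer, via the bound on $t^2$. Two corrections there. First, ``$t^2\le y_k$'' is not valid in general; from $\|x_k+tv\|=\sqrt{y_k}$ and $\|v\|=1$ the triangle inequality gives only $|t|\le\|x_k\|+\sqrt{y_k}<2\sqrt{y_k}$. Second, and more to the point, the claim that $y_k=O\bigl(((-\lambda_1+\eta)/\rho)^2\bigr)$ is exactly what the reset in Lines~8--12 is engineered to enforce: after the reset, whenever $\|x_k\|^2<y_k$ one necessarily has $\sqrt{y_k}\le(-\theta+\epsilon)/\rho=(-\lambda_1+\eta)/\rho$. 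You invoke the reset only for its effect of not increasing $f_3$, but its second role---capping $y_k$ in the strict-inequality case---is what feeds the bound on $t^2$. With this in hand, the paper's computation (using $v^TAv=\theta=\lambda_1+\epsilon-\eta$, since $\theta$ is the Rayleigh quotient of the Lanczos vector $v$) gives $f_1(\tilde x)-f_3(x_k,y_k)\le \epsilon t^2/2$, and combining the two layers yields the stated bound.
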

\begin{proof}
Recall that $f_2$ and $f_3$ are the objective functions of \eqref{opt:BCP} and \eqref{opt:BCP_p}, respectively.
For any optimal solution $x^*$ of \eqref{opt:CRS}, $(x^*,\|x^*\|^2)$ is an optimal solution of \eqref{opt:BCP}.
Therefore, an optimal solution $(x^*, y^*)$ satisfying $\|x^*\|^2 = y^*$ always exists.
Let $E_{k}=f_3(x_{k},y_{k})-f_3(x^\eta,y^\eta)$.  From Lemma \ref{lem:APG}, we obtain that
$f_{3}(\alpha_k)-f_3(x^\eta,y^\eta)<\epsilon.$ If  $\alpha_k(n+1)>\|\alpha_k(1:n)\| ^2$ and $\sqrt{\alpha_k(n+1)}>(-\lambda_1+\eta)/\rho$, we then go to Line 8 and Algorithm 1 outputs $(x_k,y_k)$ instead of $\alpha_k$.
The $y$-part of the objective function $f_3$, i.e.,
$$\frac{\rho}{3}y^\frac{3}{2}-\frac{-\theta+\epsilon}{2}y,$$
is increasing when $\sqrt y\ge(-\theta+\epsilon)/\rho$, and hence Line 8 outputs a  solution whose objective value is at most $f_3(\alpha_k)$. Hence
 $E_{k}\le \epsilon$ for all $k\ge\sqrt{2\xi\gamma}R\epsilon^{-1/2}-1$.
Using this, we have
\begin{equation}\label{eq:f2bd}
\begin{split}
&\, f_3(x_{k},y_{k})-f_2(x^*,y^*)\\
= &\, f_3(x_{k},y_{k})-f_3(x^\eta,y^\eta)+f_3(x^\eta,y^\eta)-f_3(x^*,y^*)+f_3(x^*,y^*)-f_2(x^*,y^*)\\
\le&\, E_{k}+0+\frac{\eta}{2}(\|x^{*}\|^2-y^*)\\
=&\, E_{k},
\end{split}
\end{equation}
where  the inequality follows from the fact $f_3(x^\eta,y^\eta)-f_3(x^*,y^*)\le0$ because $(x^\eta,y^\eta)$ is an optimal solution to \eqref{opt:BCP_p} and the last equality from the fact that $\|x^*\|^2=y^*$.

If $\|x_k\|^2=y_k$, we have that $\tilde x=x_k$ and hence that $f_3(x_{k},y_{k})=f_1(\tilde x)$.
Substituting $f_3(x_{k},y_{k})=f_{1}(\tilde x)$ to \eqref{eq:f2bd} and noting that $f_1(x^*)=f_2(x^*,y^*)$, we have that $f_{1}(\tilde x)-f_1(x^*)\leq\epsilon.$
If $\|x_k\|^2<y_k$,  we have $\tilde x=x_k+tv$ with $\|\tilde x\|^2=y_k$ and hence
\begin{equation}\label{eq:f13bd}
\begin{split}
&\, f_{1}(\tilde x)-f_3(x_{k},y_{k})\\
=&\, \frac{1}{2} (x_{k}+tv)^TA(x_{k}+tv)+b^T(x_{k}+tv)+\frac{\rho}{3}\norm{(x_{k}+tv)}^3\\
&-\left(\frac{1}{2} x_{k}^TAx_{k}+b^Tx_{k}+\frac{\rho}{3}y_{k}^{3/2}+\frac{-\lambda_1+\eta}{2}(\|x_k\|^2-y_k)\right)\\
=& \, tv^TAx_k+\frac{t^2}{2}v^TAv+tb^Tv-\frac{-\lambda_1+\eta}{2}(\|x_k\|^2-\|x_k+tv\|^2)\\
=& \, t(v^TAx_k+b^Tv)+\frac{t^2}{2}(\lambda_1+\epsilon-\eta)-\frac{-\lambda_1+\eta}{2}(-2tx_k^Tv-t^2)\\
=& \, t(v^TAx_k+b^Tv+(-\lambda_1+\eta )x_k^Tv)+\epsilon t^2/2\\
\le&\, \epsilon t^2/2,
\end{split}
\end{equation}
where the third equality follows from $v^TAv=\theta=\lambda_1+\epsilon-\eta$ and the inequality from $t(v^TAx_k+b^Tv+(-\lambda_1+\eta )x_k^Tv)\le0$.
Note that a constant $t$ satisfying such an inequality always exists. Indeed, since $\|x_k\|^2<y_k$, the equation $\|x_k+tv\|^2=y_k$ (in $t$) have two roots of opposite signs.
Hence, we can always choose a $t$ such that $t(v^TAx_k+b^Tv+(-\lambda_1+\eta )x_k^Tv)\le0.$
Using the  inequalities \eqref{eq:f2bd}, \eqref{eq:f13bd} and the fact that $f_1(x^*)=f_2(x^*,y^*)$, we get
$ f_{1}(\tilde x)-f_1(x^*)\le \epsilon+\epsilon t^2/2$.
Also, $\|x_k+tv\|^2=y_k$ implies that $t\le \|x_k\|+\sqrt{y_k}\le2\left(\frac{-\lambda_1+\eta}{\rho}\right )$.
Thus, we have
$$ f_{1}(\tilde x)-f_1(x^*)\le \epsilon+2\epsilon\left(\frac{-\lambda_1+\eta}{\rho}\right )^2\le\epsilon+2\left(\frac{-\lambda_1+\epsilon}{\rho}\right )^2\epsilon,$$ where the last inequality follows from $0\le \eta:=-\theta+\lambda_1+\epsilon\le\epsilon$.

From Lemma \ref{lem:lanczos}, with probability at least $1-\delta$, such $\theta$ and $v$ can be computed in at most $\frac{\log(n/\delta^2)}{2\sqrt2}\sqrt{\frac{ \|A\|_2}{\epsilon}}$ iterations.
And Lemma \ref{lem:APG} shows that the number of operations required by Algorithm~\ref{alg:APG} is at most $\sqrt{2\xi\gamma}R\epsilon^{-1/2}-1$. This completes the proof.
\end{proof}

\section{Numerical experiments}
In this section, we first compare performance of our subproblem solver to the Krylov subspace method on randomly generated instances whose matrix $A$ in the quadratic term has at least one negative eigenvalue.
We then compare ARC (\cite{cartis2011adaptive}) algorithms with different subproblem solvers on test problems from the CUTEst collection (\cite{gould2015cutest}). 

\subsection{Comparison for subproblem solvers}
\begin{sidewaystable}
  \tiny
  \centering
  \begin{tabular}{c|cccc|cccc|cccc|cccc}
  \toprule

  \multicolumn{17}{c}{$K=10,~n=2000$}\\
  \midrule

  \multirow{2}{*}{Methods}
  &\multicolumn{4}{c|}{$\kappa=10$}
  &\multicolumn{4}{c|}{$\kappa=10^2$}
  &\multicolumn{4}{c|}{$\kappa=10^3$}
  &\multicolumn{4}{c}{$\kappa=10^4$} \\
  \cmidrule(r){2-5} \cmidrule(r){6-9} \cmidrule(r){10-13} \cmidrule(r){14-17}
    &  fval-opt  &iter  & time  & time$_\text{eig}$
    &   fval-opt  &iter  & time  & time$_\text{eig}$
    &   fval-opt  &iter  & time  & time$_\text{eig}$
    &   fval-opt  &iter  & time  & time$_\text{eig}$  \\
  \midrule

  \texttt{BBM(AP)}  & 5.5e-06 & 13.2 & 2.85e-03 & 1.95e-03
  & 8.3e-06 & 52.4 & 3.58e-03 & 1.76e-03
  & 6.9e-06 & 120.4 & 1.11e-02 & 5.79e-03
  & 8.3e-06 & 338.8 & 1.71e-02 & 7.61e-03 \\
  \texttt{BBM(SP)}  & 4.3e-06 & 15.0 & 2.70e-03 & 1.95e-03
  & 6.3e-06 & 42.2 & 3.15e-03 & 1.76e-03
  & 6.8e-06 & 123.8 & 1.08e-02 & 5.79e-03
  & 6.4e-04 & 361.6 & 1.73e-02 & 7.61e-03 \\
  \texttt{APG(AP)} & 4.1e-06 & 15.2 & 2.85e-03 & 1.95e-03
  & 8.6e-06 & 55.4 & 3.77e-03 & 1.76e-03
  & 8.6e-06 & 99.8 & 1.01e-02 & 5.79e-03
  & 2.1e-03 & 276.4 & 1.60e-02 & 7.61e-03 \\
  \texttt{APG(SP)} & 6.4e-06 & 15.2 & 2.71e-03 & 1.95e-03
  & 8.0e-06 & 54.0 & 3.69e-03 & 1.76e-03
  & 9.0e-06 & 108.8 & 9.56e-03 & 5.79e-03
  & 3.2e-03 & 326.0 & 1.75e-02 & 7.61e-03 \\
  \texttt{Krylov} & 6.8e-06 & 9.0 & 1.69e-03 & 0
  & 7.7e-06 & 26.6 & 3.12e-03 &0
  & 9.5e-06 & 61.8 & 7.31e-03 & 0
  & 8.9e-06 & 90.8 & 9.58e-03 & 0 \\
  \midrule
  \midrule

  \multicolumn{17}{c}{$K=10,~n=10000$}\\
  \midrule

  \multirow{2}{*}{Methods}
  &\multicolumn{4}{c|}{$\kappa=10$}
  &\multicolumn{4}{c|}{$\kappa=10^2$}
  &\multicolumn{4}{c|}{$\kappa=10^3$}
  &\multicolumn{4}{c}{$\kappa=10^4$} \\
  \cmidrule(r){2-5} \cmidrule(r){6-9} \cmidrule(r){10-13} \cmidrule(r){14-17}
      &  fval-opt  &iter  & time  & time$_\text{eig}$
      &   fval-opt  &iter  & time  & time$_\text{eig}$
      &   fval-opt  &iter  & time  & time$_\text{eig}$
      &   fval-opt  &iter  & time  & time$_\text{eig}$  \\
  \midrule

  \texttt{BBM(AP)} & 3.2e-06 & 11.0 & 1.31e-02 & 8.11e-03
  & 7.4e-06 & 55.0 & 2.57e-02 & 8.22e-03
  & 8.6e-06 & 168.0 & 1.11e-01 & 6.77e-02
  & 8.5e-06 & 348.0 & 1.59e-01 & 7.49e-02 \\
  \texttt{BBM(SP)} & 8.4e-06 & 11.0 & 1.25e-02 & 8.11e-03
  & 9.9e-06 & 57.0 & 2.28e-02 & 8.22e-03
  & 7.5e-06 & 152.0 & 1.04e-01 & 6.77e-02
  & 2.8e-06 & 415.0 & 1.69e-01 & 7.49e-02 \\
  \texttt{APG(AP)}  & 2.2e-06 & 29.0 & 2.02e-02 & 8.11e-03
  & 1.0e-05 & 68.0 & 2.87e-02 & 8.22e-03
  & 9.6e-06 & 120.0 & 1.01e-01 & 6.77e-02
  & 9.9e-06 & 283.0 & 1.49e-01 & 7.49e-02 \\
  \texttt{APG(SP)} & 6.9e-06 & 12.0 & 1.35e-02 & 8.11e-03
  & 9.5e-06 & 70.0 & 2.78e-02 & 8.22e-03
  & 9.4e-06 & 135.0 & 1.06e-01 & 6.77e-02
  & 1.0e-05 & 324.0 & 1.59e-01 & 7.49e-02 \\
  \texttt{Krylov}  & 6.1e-06 & 9.0 & 4.05e-03 & 0
  & 8.2e-06 & 27.0 & 9.30e-03 & 0
  & 9.2e-06 & 68.0 & 2.17e-02 & 0
  & 1.0e-05 & 146.0 & 4.55e-02 & 0\\
  \midrule
  \midrule

  \multicolumn{17}{c}{$K=100,~n=100$}\\
  \midrule

  \multirow{2}{*}{Methods}
  &\multicolumn{4}{c|}{$\kappa=10$}
  &\multicolumn{4}{c|}{$\kappa=10^2$}
  &\multicolumn{4}{c|}{$\kappa=10^3$}
  &\multicolumn{4}{c}{$\kappa=10^4$} \\
  \cmidrule(r){2-5} \cmidrule(r){6-9} \cmidrule(r){10-13} \cmidrule(r){14-17}
      &  fval-opt  &iter  & time  & time$_\text{eig}$
      &   fval-opt  &iter  & time  & time$_\text{eig}$
      &   fval-opt  &iter  & time  & time$_\text{eig}$
      &   fval-opt  &iter  & time  & time$_\text{eig}$  \\
  \midrule

  \texttt{BBM(AP)} & 5.4e-06 & 13.6 & 1.07e-03 & 7.94e-04
  & 6.6e-06 & 55.2 & 1.60e-03 & 8.09e-04
  & 1.2e-05 & 104.1 & 2.35e-03 & 1.11e-03
  & 5.5e-04 & 276.6 & 4.23e-03 & 1.39e-03 \\
  \texttt{BBM(SP)} & 4.6e-06 & 15.2 & 1.06e-03 & 7.94e-04
  & 6.9e-06 & 56.3 & 1.53e-03 & 8.09e-04
  & 7.8e-06 & 101.8 & 2.20e-03 & 1.11e-03
  & 8.8e-04 & 286.4 & 4.21e-03 & 1.39e-03 \\
  \texttt{APG(AP)} & 5.4e-06 & 17.2 & 1.13e-03 & 7.94e-04
  & 8.1e-06 & 60.2 & 1.61e-03 & 8.09e-04
  & 8.7e-06 & 96.5 & 2.21e-03 & 1.11e-03
  & 1.4e-03 & 266.8 & 4.05e-03 & 1.39e-03 \\
  \texttt{APG(SP)} & 5.8e-06 & 18.0 & 1.11e-03 & 7.94e-04
  & 7.9e-06 & 60.1 & 1.60e-03 & 8.09e-04
  & 8.4e-06 & 90.3 & 2.12e-03 & 1.11e-03
  & 2.8e-03 & 295.4 & 4.38e-03 & 1.39e-03 \\
  \texttt{Krylov}  & 5.9e-06 & 9.0 & 7.99e-04 & 0
  & 7.2e-06 & 21.9 & 1.80e-03 & 0
  & 7.0e-06 & 33.1 & 2.32e-03 & 0
  & 6.9e-06 & 34.4 & 2.15e-03 & 0 \\
  \midrule
  \midrule

  \multicolumn{17}{c}{$K=100,~n=1000$}\\
  \midrule

  \multirow{2}{*}{Methods}
    &\multicolumn{4}{c|}{$\kappa=10$}
    &\multicolumn{4}{c|}{$\kappa=10^2$}
    &\multicolumn{4}{c|}{$\kappa=10^3$}
    &\multicolumn{4}{c}{$\kappa=10^4$} \\
  \cmidrule(r){2-5} \cmidrule(r){6-9} \cmidrule(r){10-13} \cmidrule(r){14-17}
    &  fval-opt  &iter  & time  & time$_\text{eig}$
    &   fval-opt  &iter  & time  & time$_\text{eig}$
    &   fval-opt  &iter  & time  & time$_\text{eig}$
    &   fval-opt  &iter  & time  & time$_\text{eig}$  \\
  \midrule

  \texttt{BBM(AP)}  & 5.2e-06 & 13.3 & 4.97e-03 & 3.20e-03
  & 6.9e-06 & 69.8 & 9.76e-03 & 3.61e-03
  & 8.8e-06 & 150.7 & 2.09e-02 & 8.10e-03
  & 1.3e-05 & 246.1 & 3.94e-02 & 1.71e-02 \\
  \texttt{BBM(SP)} & 4.5e-06 & 15.1 & 4.92e-03 & 3.20e-03
  & 6.7e-06 & 70.5 & 9.76e-03 & 3.61e-03
  & 8.0e-06 & 142.6 & 2.00e-02 & 8.10e-03
  & 1.1e-03 & 337.9 & 4.43e-02 & 1.71e-02 \\
  \texttt{APG(AP)}  & 5.0e-06 & 17.3 & 5.29e-03 & 3.20e-03
  & 8.6e-06 & 86.3 & 1.13e-02 & 3.61e-03
  & 9.2e-06 & 127.6 & 1.92e-02 & 8.10e-03
  & 2.4e-03 & 288.6 & 4.09e-02 & 1.71e-02 \\
  \texttt{APG(SP)}  & 7.8e-06 & 16.7 & 5.21e-03 & 3.20e-03
  & 8.6e-06 & 76.7 & 1.05e-02 & 3.61e-03
  & 9.5e-06 & 120.8 & 1.85e-02 & 8.10e-03
  & 3.2e-03 & 285.5 & 4.06e-02 & 1.71e-02 \\
  \texttt{Krylov} & 6.8e-06 & 9.0 & 2.52e-03 & 0
  & 7.9e-06 & 26.1 & 6.69e-03 & 0
  & 9.0e-06 & 60.6 & 1.50e-02 &0
  & 9.1e-06 & 87.0 & 2.08e-02 & 0 \\

  \bottomrule
  \end{tabular}
  \caption{Comparison between Krylov subspace methods and our methods for solving \eqref{opt:CRS} for different dimensions and sparsity levels in the \textbf{easy} case. Time unit: second.\label{tab:6}}
\end{sidewaystable}

\begin{sidewaystable}
  \tiny
  \centering
  \begin{tabular}{c|cccc|cccc|cccc|cccc}
  \toprule

  \multicolumn{17}{c}{$K=100,~n=5000$}\\
  \midrule

  \multirow{2}{*}{Methods}
  &\multicolumn{4}{c|}{$\kappa=10$}
  &\multicolumn{4}{c|}{$\kappa=10^2$}
  &\multicolumn{4}{c|}{$\kappa=10^3$}
  &\multicolumn{4}{c}{$\kappa=10^4$} \\
  \cmidrule(r){2-5} \cmidrule(r){6-9} \cmidrule(r){10-13} \cmidrule(r){14-17}
    &  fval-opt  &iter  & time  & time$_\text{eig}$
    &   fval-opt  &iter  & time  & time$_\text{eig}$
    &   fval-opt  &iter  & time  & time$_\text{eig}$
    &   fval-opt  &iter  & time  & time$_\text{eig}$  \\
  \midrule

  \texttt{BBM(AP)} & 4.9e-06 & 12.6 & 2.74e-02 & 1.56e-02
  & 7.1e-06 & 57.6 & 5.39e-02 & 1.75e-02
  & 6.7e-06 & 143.2 & 1.26e-01 & 5.29e-02
  & 2.6e-04 & 382.8 & 3.06e-01 & 1.21e-01 \\
  \texttt{BBM(SP)}  & 5.0e-06 & 12.7 & 2.68e-02 & 1.56e-02
  & 8.0e-06 & 58.2 & 4.95e-02 & 1.75e-02
  & 5.7e-06 & 159.6 & 1.32e-01 & 5.29e-02
  & 2.6e-04 & 390.9 & 3.07e-01 & 1.21e-01 \\
  \texttt{APG(AP)}  & 6.3e-06 & 14.1 & 2.77e-02 & 1.56e-02
  & 8.3e-06 & 69.0 & 5.64e-02 & 1.75e-02
  & 9.7e-06 & 115.0 & 1.13e-01 & 5.29e-02
  & 3.9e-04 & 344.1 & 2.90e-01 & 1.21e-01 \\
  \texttt{APG(SP)}  & 4.0e-06 & 18.5 & 3.04e-02 & 1.56e-02
  & 8.2e-06 & 57.8 & 5.07e-02 & 1.75e-02
  & 9.4e-06 & 121.4 & 1.16e-01 & 5.29e-02
  & 6.4e-04 & 305.8 & 2.71e-01 & 1.21e-01 \\
  \texttt{Krylov} & 6.4e-06 & 9.0 & 8.01e-03 & 0
  & 8.4e-06 & 26.5 & 2.37e-02 & 0
  & 9.4e-06 & 68.6 & 6.02e-02 & 0
  & 9.5e-06 & 134.9 & 1.16e-01 & 0 \\
  \midrule
  \midrule

  \multicolumn{17}{c}{$K=100,~n=10000$}\\
  \midrule

  \multirow{2}{*}{Methods}
  &\multicolumn{4}{c|}{$\kappa=10$}
  &\multicolumn{4}{c|}{$\kappa=10^2$}
  &\multicolumn{4}{c|}{$\kappa=10^3$}
  &\multicolumn{4}{c}{$\kappa=10^4$} \\
  \cmidrule(r){2-5} \cmidrule(r){6-9} \cmidrule(r){10-13} \cmidrule(r){14-17}
    &  fval-opt  &iter  & time  & time$_\text{eig}$
    &   fval-opt  &iter  & time  & time$_\text{eig}$
    &   fval-opt  &iter  & time  & time$_\text{eig}$
    &   fval-opt  &iter  & time  & time$_\text{eig}$  \\
  \midrule

  \texttt{BBM(AP)} & 6.4e-06 & 13.1 & 4.91e-02 & 2.70e-02
  & 7.2e-06 & 45.0 & 8.61e-02 & 2.83e-02
  & 8.1e-06 & 144.1 & 2.68e-01 & 1.02e-01
  & 1.0e-05 & 383.1 & 7.73e-01 & 3.38e-01 \\
  \texttt{BBM(SP)}  & 4.5e-06 & 13.0 & 4.86e-02 & 2.70e-02
  & 8.6e-06 & 47.8 & 8.87e-02 & 2.83e-02
  & 8.3e-06 & 153.0 & 2.78e-01 & 1.02e-01
  & 1.3e-04 & 361.7 & 7.46e-01 & 3.38e-01 \\
  \texttt{APG(AP)}  & 5.1e-06 & 17.1 & 5.50e-02 & 2.70e-02
  & 7.7e-06 & 46.1 & 8.92e-02 & 2.83e-02
  & 8.9e-06 & 130.9 & 2.57e-01 & 1.02e-01
  & 9.0e-04 & 381.9 & 7.80e-01 & 3.38e-01 \\
  \texttt{APG(SP)} & 5.6e-06 & 18.2 & 5.68e-02 & 2.70e-02
  & 7.0e-06 & 47.3 & 9.08e-02 & 2.83e-02
  & 9.5e-06 & 123.9 & 2.50e-01 & 1.02e-01
  & 9.4e-06 & 380.9 & 7.76e-01 & 3.38e-01 \\
  \texttt{Krylov}  & 6.5e-06 & 9.0 & 1.76e-02 & 0
  & 8.2e-06 & 26.1 & 5.12e-02 & 0
  & 9.5e-06 & 69.9 & 1.37e-01 & 0
  & 9.7e-06 & 147.3 & 2.91e-01 & 0 \\
  \midrule
  \midrule

  \multicolumn{17}{c}{$K=1000,~n=1000$}\\
  \midrule

  \multirow{2}{*}{Methods}
  &\multicolumn{4}{c|}{$\kappa=10$}
  &\multicolumn{4}{c|}{$\kappa=10^2$}
  &\multicolumn{4}{c|}{$\kappa=10^3$}
  &\multicolumn{4}{c}{$\kappa=10^4$} \\
  \cmidrule(r){2-5} \cmidrule(r){6-9} \cmidrule(r){10-13} \cmidrule(r){14-17}
    &  fval-opt  &iter  & time  & time$_\text{eig}$
    &   fval-opt  &iter  & time  & time$_\text{eig}$
    &   fval-opt  &iter  & time  & time$_\text{eig}$
    &   fval-opt  &iter  & time  & time$_\text{eig}$  \\
  \midrule

  \texttt{BBM(AP)}  & 6.0e-06 & 12.3 & 4.33e-02 & 2.38e-02
  & 6.3e-06 & 59.9 & 9.52e-02 & 2.54e-02
  & 7.8e-06 & 132.9 & 1.93e-01 & 5.60e-02
  & 7.0e-04 & 332.9 & 4.57e-01 & 1.16e-01 \\
  \texttt{BBM(SP)}  & 7.0e-06 & 13.9 & 4.46e-02 & 2.38e-02
  & 5.8e-06 & 55.3 & 8.77e-02 & 2.54e-02
  & 7.9e-06 & 127.0 & 1.87e-01 & 5.60e-02
  & 9.8e-04 & 283.1 & 4.05e-01 & 1.16e-01 \\
  \texttt{APG(AP)} & 5.0e-06 & 15.8 & 4.76e-02 & 2.38e-02
  & 7.5e-06 & 61.1 & 9.46e-02 & 2.54e-02
  & 8.9e-06 & 102.7 & 1.65e-01 & 5.60e-02
  & 1.2e-03 & 312.1 & 4.37e-01 & 1.16e-01 \\
  \texttt{APG(SP)}  & 5.6e-06 & 16.4 & 4.79e-02 & 2.38e-02
  & 7.7e-06 & 57.1 & 9.07e-02 & 2.54e-02
  & 9.5e-06 & 109.3 & 1.71e-01 & 5.60e-02
  & 1.8e-03 & 358.6 & 4.83e-01 & 1.16e-01 \\
  \texttt{Krylov} & 6.4e-06 & 9.0 & 1.69e-02 & 0
  & 8.0e-06 & 26.5 & 5.11e-02 & 0
  & 9.0e-06 & 59.9 & 1.13e-01 &0
  & 8.8e-06 & 81.4 & 1.58e-01 & 0 \\
  \midrule
  \midrule

  \multicolumn{17}{c}{$K=1000,~n=5000$}\\
  \midrule

  \multirow{2}{*}{Methods}
  &\multicolumn{4}{c|}{$\kappa=10$}
  &\multicolumn{4}{c|}{$\kappa=10^2$}
  &\multicolumn{4}{c|}{$\kappa=10^3$}
  &\multicolumn{4}{c}{$\kappa=10^4$} \\
  \cmidrule(r){2-5} \cmidrule(r){6-9} \cmidrule(r){10-13} \cmidrule(r){14-17}
    &  fval-opt  &iter  & time  & time$_\text{eig}$
    &   fval-opt  &iter  & time  & time$_\text{eig}$
    &   fval-opt  &iter  & time  & time$_\text{eig}$
    &   fval-opt  &iter  & time  & time$_\text{eig}$  \\
  \midrule

  \texttt{BBM(AP)}  & 4.9e-06 & 13.6 & 2.86e-01 & 1.61e-01
  & 6.0e-06 & 54.9 & 5.46e-01 & 1.66e-01
  & 8.1e-06 & 144.1 & 1.45e+00 & 5.33e-01
  & 9.7e-06 & 367.2 & 3.42e+00 & 1.15e+00 \\
  \texttt{BBM(SP)} & 5.8e-06 & 13.7 & 2.83e-01 & 1.61e-01
  & 7.6e-06 & 51.3 & 5.19e-01 & 1.66e-01
  & 7.3e-06 & 163.1 & 1.55e+00 & 5.33e-01
  & 7.8e-04 & 394.9 & 3.58e+00 & 1.15e+00 \\
  \texttt{APG(AP)}   & 5.7e-06 & 15.6 & 2.99e-01 & 1.61e-01
  & 8.8e-06 & 54.5 & 5.44e-01 & 1.66e-01
  & 9.2e-06 & 120.8 & 1.31e+00 & 5.33e-01
  & 9.9e-06 & 333.7 & 3.22e+00 & 1.15e+00 \\
  \texttt{APG(SP)}  & 5.4e-06 & 17.4 & 3.11e-01 & 1.61e-01
  & 8.1e-06 & 55.8 & 5.49e-01 & 1.66e-01
  & 9.4e-06 & 128.0 & 1.35e+00 & 5.33e-01
  & 5.3e-04 & 328.1 & 3.18e+00 & 1.15e+00 \\
  \texttt{Krylov} & 6.6e-06 & 9.0 & 1.02e-01 &0
  & 8.3e-06 & 26.1 & 2.98e-01 & 0
  & 9.4e-06 & 69.2 & 7.92e-01 & 0
  & 9.6e-06 & 131.7 & 1.52e+00 & 0\\

  \midrule
  \midrule


  \multicolumn{17}{c}{$K=1000,~n=10000$}\\
  \midrule

  \multirow{2}{*}{Methods}
  &\multicolumn{4}{c|}{$\kappa=10$}
  &\multicolumn{4}{c|}{$\kappa=10^2$}
  &\multicolumn{4}{c|}{$\kappa=10^3$}
  &\multicolumn{4}{c}{$\kappa=10^4$} \\
  \cmidrule(r){2-5} \cmidrule(r){6-9} \cmidrule(r){10-13} \cmidrule(r){14-17}
    &  fval-opt  &iter  & time  & time$_\text{eig}$
    &   fval-opt  &iter  & time  & time$_\text{eig}$
    &   fval-opt  &iter  & time  & time$_\text{eig}$
    &   fval-opt  &iter  & time  & time$_\text{eig}$  \\
  \midrule

  \texttt{BBM(AP)} &  4.9e-06 & 12.7 & 5.21e-01 & 2.91e-01
  & 7.5e-06 & 51.4 & 1.02e+00 & 3.01e-01
  & 7.7e-06 & 140.1 & 2.68e+00 & 8.24e-01
  & 1.5e-05 & 419.7 & 8.25e+00 & 2.95e+00 \\
  \texttt{BBM(SP)}  & 5.2e-06 & 13.5 & 5.27e-01 & 2.91e-01
  & 8.3e-06 & 46.1 & 9.51e-01 & 3.01e-01
  & 8.9e-06 & 131.8 & 2.56e+00 & 8.24e-01
  & 1.5e-05 & 428.3 & 8.33e+00 & 2.95e+00 \\
  \texttt{APG(AP)}  & 6.1e-06 & 14.1 & 5.50e-01 & 2.91e-01
  & 7.5e-06 & 51.6 & 1.04e+00 & 3.01e-01
  & 9.3e-06 & 107.8 & 2.26e+00 & 8.24e-01
  & 1.4e-05 & 364.3 & 7.58e+00 & 2.95e+00 \\
  \texttt{APG(SP)}  & 5.5e-06 & 15.3 & 5.65e-01 & 2.91e-01
  & 8.6e-06 & 50.1 & 1.01e+00 & 3.01e-01
  & 9.3e-06 & 105.3 & 2.23e+00 & 8.24e-01
  & 2.5e-04 & 364.5 & 7.58e+00 & 2.95e+00 \\
  \texttt{Krylov} & 6.6e-06 & 9.0 & 2.02e-01 & 0
  & 8.1e-06 & 26.5 & 6.12e-01 & 0
  & 9.3e-06 & 65.4 & 1.53e+00 &0
  & 9.6e-06 & 149.5 & 3.47e+00 &0 \\

  \bottomrule
  \end{tabular}
  \caption{Comparison between Krylov subspace methods and our methods for solving \eqref{opt:CRS} for different dimensions and sparsity levels in the \textbf{easy} case. Time unit: second.\label{tab:7}}
\end{sidewaystable}

\begin{sidewaystable}
  \tiny
  \centering
  \begin{tabular}{c|cccc|cccc|cccc|cccc}
  \toprule

  \multicolumn{17}{c}{$K=1000,~n=10000$}\\
  \midrule

  \multirow{2}{*}{Methods}
  &\multicolumn{4}{c|}{$ \textsf{gap} =10^{-1}$}
  &\multicolumn{4}{c|}{$ \textsf{gap} =10^{-2}$}
  &\multicolumn{4}{c|}{$ \textsf{gap} =10^{-3}$}
  &\multicolumn{4}{c}{$ \textsf{gap} =10^{-4}$} \\
  \cmidrule(r){2-5} \cmidrule(r){6-9} \cmidrule(r){10-13} \cmidrule(r){14-17}
    &  fval-opt  &iter  & time  & time$_\text{eig}$
    &   fval-opt  &iter  & time  & time$_\text{eig}$
    &   fval-opt  &iter  & time  & time$_\text{eig}$
    &   fval-opt  &iter  & time  & time$_\text{eig}$  \\
  \midrule

  \texttt{BBM(AP)}  & 8.2e-06 & 8.8 & 6.49e-01 & 4.47e-01
  & 7.7e-06 & 27.1 & 1.52e+00 & 1.09e+00
  & 8.6e-06 & 53.7 & 3.61e+00 & 2.83e+00
  & 8.8e-06 & 63.4 & 8.07e+00 & 7.18e+00 \\
  \texttt{BBM(SP)}  & 8.2e-06 & 8.8 & 6.41e-01 & 4.47e-01
  & 7.6e-06 & 27.4 & 1.52e+00 & 1.09e+00
  & 8.0e-06 & 54.7 & 3.59e+00 & 2.83e+00
  & 9.6e-06 & 62.2 & 8.04e+00 & 7.18e+00 \\
  \texttt{APG(AP)} & 5.6e-06 & 7.6 & 6.43e-01 & 4.47e-01
  & 7.5e-06 & 16.0 & 1.39e+00 & 1.09e+00
  & 9.2e-06 & 36.3 & 3.39e+00 & 2.83e+00
  & 9.9e-06 & 41.1 & 7.79e+00 & 7.18e+00 \\
  \texttt{APG(SP)} & 5.6e-06 & 7.6 & 6.41e-01 & 4.47e-01
  & 7.5e-06 & 16.0 & 1.39e+00 & 1.09e+00
  & 9.7e-06 & 35.8 & 3.38e+00 & 2.83e+00
  & 1.0e-05 & 39.1 & 7.78e+00 & 7.18e+00 \\
  \texttt{Krylov} & 8.8e-01 & 455.9 & 1.07e+01 &0
  & 1.9e+01 & 175.1 & 4.12e+00 & 0
  & 3.7e-03 & 442.5 & 1.04e+01 & 0
  & 1.1e-03 & 500.0 & 1.17e+01 & 0 \\

  \bottomrule
  \end{tabular}
  \caption{Comparison between Krylov subspace methods and our methods for solving \eqref{opt:CRS} for different dimensions and sparsity levels in the \textbf{hard} case. Time unit: second.\label{tab:9}}
\end{sidewaystable}
In this subsection, we compare the numerical performance between our methods and the Krylov subspace method \cite{cartis2011adaptive} using randomly generated instances.
The problem instances are generated  in the same manner as in \cite{carmon2018analysis}, except that we replace both the original diagonal matrix $A$ and vector $b$ by  $Q^TAQ$ and $Q^Tb$, respectively to make the problem more computationally involved and less trivial.
The matrix $Q$ is a random block diagonal matrix (with $n/K$ blocks) and each block is generated by the MATLAB command \texttt{orth(rand(K))} with $K$ being a positive integer. Note that the random matrices generated in this manner are of full rank almost surely. As pointed out in \cite{carmon2018analysis}, by construction, the optimal values are $-1$ for all cases.
Problems with different dimensions $n$ and different sparsity levels were tested.
The sparsity of matrix $A$ is then $K/n$, i.e., a proportion $K/n$ of the total entries are nonzero.
For fixed $K$ and $n$, problems with different condition numbers $\kappa$ and eigen-gaps \textsf{gap} (to be defined later) in the easy and hard cases were also tested, which are believed to strongly affect the hardness of problem \eqref{opt:CRS} and the Krylov subspace method \cite{carmon2018analysis}. In the easy case, we tested problems with the condition number $\kappa=\frac{\lambda_n+\lambda^*}{\lambda_1+\lambda^*}$, where $\lambda_n$ is the largest eigenvalue of $A$ and $\lambda^*$ is the optimal Lagrangian multiplier, which is an indicator for the hardness of the problem \cite{carmon2018analysis}.
In the hard case, we tested problems with different eigen-gap $\textsf{gap}=\lambda_{2}-\lambda_1$, where $\lambda_2$ is the second smallest eigenvalue of matrix $A$. All experiments were run on a Windows workshop with   16 Intel Xeon W-2145 cores (3.70GHz) and 64GB of RAM. 

The approximate eigenvalue in formulating the surrogate problem was computed by the MATLAB function \texttt{eigs}. We found empirically that setting the tolerance (an input argument of the MATLAB function \texttt{eigs}) to be $5/\kappa$ in the easy case and $10^{-6}$ in the hard case yields a reasonable trade-off between accuracy and efficiency.
Both \eqref{opt:BCP_p} and \eqref{opt:AP} were tested.
Besides APG, we have also applied BBM to solve the problems \eqref{opt:BCP_p} and \eqref{opt:AP}.
For APG, we used a restarting strategy, which is a common method for speeding up the algorithm \cite{o2015adaptive,ito2017unified}.
For BBM, we used a simple line search rule to guarantee the decrease of the objective function values.
As we know the optimal value is $-1$, we terminate our algorithm and  the Krylov subspace method\footnote{The authors are indebted to Coralia Cartis for her kind sharing of the MATLAB codes for the Krylov subspace method.} if the objective value is less than \texttt{-1+1e-6}.


Tables \ref{tab:6}--\ref{tab:9} show the  performance comparison of our methods and the Krylov subspace method.
In the tables,
 \texttt{BBM(AP)} denotes the method that solves problem~\eqref{opt:AP} by BBM; \texttt{BBM(SP)} denotes the method that solves problem~\eqref{opt:BCP_p} by BBM; \texttt{APG(AP)} denotes the method that solves problem~\eqref{opt:AP} by APG; \texttt{APG(SP)} denotes the method that solves problem~\eqref{opt:BCP_p} by APG; and \texttt{Krylov} denotes the Krylov subspace methods for directly solving problem~\eqref{opt:CRS}.
In the tables, \texttt{fval-opt}  denotes the objective value accuracy, which is the objective value returned by the algorithm minus the optimal value;
\texttt{iter} denotes the iteration number of each algorithm;
\texttt{time} denotes the total time of each algorithm;
\texttt{time$_\text{eig}$} denotes the time cost for approximately computing the minimum eigenvalue, which is 0 for the Krylov subspace method.

From Tables  \ref{tab:6} and \ref{tab:7} , we see that in the easy case, our methods achieved the prescribed accuracy when $\kappa<10^4$
and were a bit slower than  the Krylov subspace method.
All our four methods took more iterations and CPU time as the condition number $\kappa$ increases, as expected.
We also obtain that in our methods the eigenvalue computation took about 1/3 to 1/2 of the total CPU time and
the ratio of \texttt{time$_\text{eig}$} over \texttt{time}  becomes slightly smaller as the condition number increases.
From  Table  \ref{tab:9}, we see that in the hard case, our methods performed much better than the Krylov subspace method in terms of solution quality, iteration number and  CPU time. 
All our four methods took more iterations and CPU time as the eigen-gap $\kappa$ increases, as expected.
We also observe that the eigenvalue computation took more than 2/3 of total time and 
the ratio of time$_\text{eig}$ over time becomes larger if the eigen-gap decreases.
For the test problems with \texttt{gap}$=10^{-4}$, the Krylov subspace method attained the maximum time 500 seconds and failed to return a solution satisfying the stopping criteria, while
our methods sufficiently solved all the problems in less than 10 seconds on average.
As our methods always outperform the Krylov subspace method in the hard case, we do not report more results for the hard case.
In fact, the Krylov subspace method fails to find an approximate solution, while our methods always find a good approximate solution with an accuracy $10^{-6}$.
We also notice that, in both the easy and hard cases, APG are slightly better than BBM, especially for instances with a large condition number,
and each of APG and BBM has a similar performance on solving (AP) and (SP).
Comparing the ratio time$_\text{eig}$/time, we conclude that the two considered first-order methods performs on par in terms of solving the surrogate problems \eqref{opt:AP} and \eqref{opt:BCP_p}. For future research, we would like to develop more efficient methods for solving the surrogate problem.

\subsection{Numerical tests on CUTEst problems}
In this subsection, we compare the numerical performance of ARC algorithms (\cite{cartis2011adaptive}) implemented with different
subproblem solvers on unconstrained test problems of the CUTEst collections.

Towards that end, we describe a variant of ARC, Algorithm~\ref{alg:arc}, whose subproblem solver is based on our reformulation.
Denoting the function to minimize by $F$, in each iteration, we compute an approximate solution for the cubic regularization model function
\[\min_s m_k(s)=s^TB_ks+g_k^Ts+\frac{\sigma_k}{3}\|s\|^3,\]
where $B_k$ is an approximation of the Hessian $\nabla^2F(x_k)$, $g_k=\nabla F(x_k)$ and $\sigma_k$ is an adaptive parameter.
Suppose $\mathcal S$ is an arbitrary  solver for \eqref{opt:CRS}
and   $\mathcal A$ is an arbitrary   solver for the surrogate problem \eqref{opt:AP}.
In our algorithm, we call $\mathcal A$ if the following condition is met:
\begin{equation}
\label{eq:solvercon}
\|g_k\|\le\max\left(F(x_{k}),1\right)\cdot \epsilon_1\quad\text{ and }\quad\lambda_1(B_k)<-\epsilon_2,
\end{equation}
where $\epsilon_1$ and $\epsilon_2$ are some small positive real numbers and $\lambda_1(B_k)$ is the minimum eigenvalue of $B_k$;
and otherwise we call $\mathcal S$ to solve the model function directly. Condition~\eqref{eq:solvercon} is motivated by the facts that the Cauchy point is a good initial point when the norm of the gradient is large and that the subproblem solver $\mathcal A$ is designed for cases where $B_k$ at current iterate has at least one negative eigenvalue.
We use the Cauchy point \cite{cartis2011adaptive} as an initial point:
\[s_k^C=-\alpha_k^C\text{ and }\alpha_k^C=\argmin_{\alpha\in\R_+} m_k(-\alpha g_k).\]
The (approximate) solution $s_k$ to the model function returned by the solver $\mathcal S$ or $\mathcal A$ is accepted as the trial step if the model function value at $s_k$ is smaller than that at the Cauchy point $s_k^C$; otherwise the Cauchy point $s_k^C$ is used.
From \cite[Lemma 2.1]{cartis2011adaptive}, the above choice of the trial step guarantees that our variant of ARC (Algorithm \ref{alg:arc}) converges to a first-order stationary point (i.e., $\lim_{k\rightarrow\infty}\|g_k\|=0$)
under some mild conditions, e.g., $F$ is a continuously differentiable function, $\|g_{t_i}-g_{l_i}\|\rightarrow0$
whenever $\|x_{t_i}-x_{l_i} \|\rightarrow0$ for any subsequences $\{t_i\}$ and $\{l_i\}$ of natural numbers and the norm of the approximate Hessian $B_k $ is upper bounded by some positive constant for all $k$.

\begin{algorithm}[!ht]
\begin{algorithmic}[1]
\caption{ARC using reformulation \eqref{opt:AP}}
\label{alg:arc}
\Require $x_0,~\gamma_2\ge \gamma_1>1,~1>\eta_2\ge\eta_1>0,$ and $\sigma_0>0$, for $k=0,1,...$ until
convergence
\State compute the Cauchy point $s_k^C$
\If {condition \eqref{eq:solvercon} is satisfied}
\State compute a trial step  $\bar s_k$ using $\mathcal A$ with an initial point $(s_k^C,\|s_k^C\|)$
\Else
\State compute a trial step $\bar s_k$ using $\mathcal S$ with an initial point $s_k^C$
\EndIf
\State set
    \begin{equation*}
      s_{k}=
      \left\{
        \begin{array}{ll}
        \bar s_k&\text{if}~m_k(\bar s_k)\le m_k(s_k^C)  \\
        s_k^C& \text{otherwise}
        \end{array}
      \right.
    \end{equation*}
\State compute $f(x_k+s_k)$ and
    \[\rho_k=\frac{f(x_k)-f(x_k+s_k)}{-m_k(s_k)}\]
\State set
    \begin{equation*}
      x_{k+1}=
      \left\{
        \begin{array}{ll}
        x_k+s_k &\text{if}~\rho_k\ge\eta_1  \\
        x_k & \text{otherwise}
        \end{array}
      \right.
    \end{equation*}
\State  set
    \begin{equation*}
      \sigma_{k+1}\in
      \left\{
        \begin{array}{lll}
        \left(0,\sigma_k\right] &\text{if}~\rho_k>\eta_2 &(\text{very successful iteration})  \\
        \left[\sigma_k,\gamma_1\sigma_k\right] &\text{if}~\eta_1\le\rho_k\le\eta_2 &(\text{successful iteration})\\
        \left[\gamma_1\sigma_k, \gamma_2\sigma_k\right] & \text{otherwise} &(\text{unsuccessful iteration})
        \end{array}
      \right.
    \end{equation*}
\end{algorithmic}
\end{algorithm}

We implemented two different subproblem solvers $\mathcal A$ in Algorithm \ref{alg:arc}.
In our implementation, if condition \eqref{eq:solvercon} is not satisfied, we still solve the original cubic model \eqref{opt:CRS} by BBM so that the effect of solving problem~\eqref{opt:CRS} via our reformulation can be shown by comparing the overall time and solution quality.
For the cases where condition~\eqref{eq:solvercon} is satisfied, we implemented both APG and BBM to solve the surrogate problem \eqref{opt:AP}.
We call the former ARC-RAPG and the latter ARC-RBB.
We compare our algorithms against the ARC algorithm in \cite{cartis2011adaptive}, denoted by ARC-GLRT, where the subproblems are solved by the Krylov subspace method and another ARC algorithm with subproblems solved by BBM, denoted by ARC-BB, which is  a simplified version of the algorithm in \cite{bianconcini2015use}.

We tested medium-size ($n\in[500,1500]$) problems from the CUTEst collections as in \cite{bianconcini2015use}.
For condition~\eqref{eq:solvercon}, we set $\epsilon_1=10^{-2}$ and $\epsilon_2=10^{-4}$.
In our numerical tests, we found that condition \eqref{eq:solvercon} is never met for some problems,
and thus ARC-RAPG and ARC-RBB reduce to ARC-BB.
Therefore, we only report results on those instances where condition~\eqref{eq:solvercon} is satisfied in at lease one iteration. There are 20 such instances.
We implemented all the ARC algorithms in MATLAB 2017a.
All the experiments were run on a Macbook Pro  laptop. 
  The parameters in ARC are chosen as described in \cite{cartis2011adaptive}. We set $B_k=H_k:=\nabla^2 F(x_k)$ and the minimum eigenvalue of $H_k$ is  approximately computed by the MATLAB command \texttt{eigs} with tolerance \texttt{1e-4}.
  We terminate each ARC algorithm if either the iteration counter $k$ reaches 5000 or
  the stopping criteria \[\|g_k\| \le 10^{-5}\quad\text{and}\quad\lambda_1(H_k) \ge -10^{-3}\]
  are met.
  In our test, all algorithms were terminated before iteration counter reaches 5000.
 When using  APG to solve  \eqref{opt:AP} in ARC-RAPG, we again used the restarting strategy as in Section 4.1.
 For BBM to solve \eqref{opt:AP} or \eqref{opt:CRS} in ARC-BB, ARC-RAPG and ARC-RBB, we used the simple line search rule to guarantee the decrease of the objective function values as in Section 4.1.
  In all implementations, we terminate the subproblem solver in each inner iteration when the iteration number of the subproblem reaches 150, or the following stopping criterion is met:
  \[\|\nabla m_k(s_k)\|\le\min\{1,\|s_k\|\}\|g(x_k)\|.\]

We report the numerical results for ARC-RAPG, ARC-RBB, ARC-GLRT and ARC-BB in Table~\ref{tab:cutest}. The first column shows the name of the problem instance.
The number below the problem name represents its dimension.
The column $f^*$  shows the final objective function value.
The columns $n_i$, $n_{\text{prod}}$ , $n_f$ , $n_g$ and $n_{\text{eig}}$ show the iteration number, number of Hessian-vector products,
number of function evaluations, number of gradient evaluations and the number of eigenvalue computations.
The columns time, time$_\text{eig}$ and time$_\text{loop}$,
show in seconds the overall CPU time, eigenvalue computation time and difference between the last two, respectively.
From Table~\ref{tab:cutest}, we see that with our stopping criteria, the algorithms return the same final objective values on almost all the problem instances (except the problem CHAINWOO). The quantities $n_i$, $n_{\text{prod}}$, $n_f$, $n_g$ and $n_{\text{eig}}$ of the four algorithms are comparable.
From the table, we see that ARC-GLRT slightly outperforms the other three algorithms.
The table also shows that for some problems, our methods ARC-RAPG and ARC-RBB take much more CPU time than ARC-GLRT and ARC-BB.
We found that this is mainly because time$_\text{eig}$ is too large and dominates the total runtime in these problems.
This can be further divided into two situations: either there are many eigenvalue computations, or
the number of eigenvalue computations is small but each eigenvalue computation takes a large amount of time (in these cases, the MATLAB function \texttt{eigs} is difficult to converge, and we used \texttt{eig} instead to compute full eigenvalues).
  Excluding the time to compute the eigenvalues, the actual times of the four algorithms do not differ much, which is evidenced by the column time$_\text{loop}$.

To get more insights from the numerical tests, we also use performance profiles (\cite{dolan2002benchmarking}) to illustrate the experimental results in Figure \ref{fig:1}--\ref{fig:3}.
We note that, although ARC-GLRT has the best performance, the iteration numbers and the gradient evaluation numbers required by our algorithm are less than 2 times of those by ARC-GLRT on over 90\% of the tests, 
and the numbers of Hessian-vector products required by our algorithms are also less than 2 times of those by ARC-GLRT on about 80\% of the tests. We further note that ARC-RAPG, ARC-RBB and ARC-BB have the similar performance in terms of iteration numbers and gradient evaluations.
We also plot the performance profiles on test problems where algorithms require a CPU time of more than 1 second in Figure~\ref{fig:4}.
Compared with the previous results, the gap between our algorithms and ARC-GLRT has narrowed.

To see more advantages of our reformulation, we also investigate the numerical results for all the 10 realizations with different initial points for each problem.
In Table~\ref{tab:cutestfull}, we report the number that ARC-RAPG or ARC-RBB outperforms ARC-GLRT and ARC-BB out of the 10 realizations for each problem. We see that our methods outperform ARC-GLRT and ARC-BB frequently in terms of iteration number, the numbers of Hessian-vector products and gradient evaluations. This shows that our new reformulation may bring advantages in ARC algorithms.

{
\centering
\scriptsize
\begin{longtable}{c|c|ccccccccc}
\toprule
Problem & Method
  &  $n_i$  & $n_{\text{prod}}$ & $n_f$ & $n_g$
  & $n_{\text{eig}}$ & $f^*$  & time  & time$_\text{eig}$
  &  time$_\text{loop}$  \\

\midrule
&\texttt{ARC-GLRT}
&43.6
&804.8
&44.6
&35.7
&-
&2.45E+02
&0.270
&-
&0.270\\
BROYDN7D&\texttt{ARC-BB}
&42.6
&864.5
&43.6
&36.3
&1.0
&2.42E+02
&0.386
&0.051
&0.335\\
(1000)&\texttt{ARC-RAPG}
&42.5
&917.5
&43.5
&36.6
&15.7
&2.42E+02
&0.888
&0.558
&0.331\\
&\texttt{ARC-RBB}
&43.9
&941.2
&44.9
&36.7
&16.9
&2.42E+02
&0.929
&0.597
&0.332\\

\midrule
&\texttt{ARC-GLRT}
&40.3
&629.5
&41.3
&31.2
&-
&5.92E-14
&0.300
&-
&0.300\\
BRYBND&\texttt{ARC-BB}
&37.3
&644.5
&38.3
&30.4
&1.0
&3.24E-13
&0.308
&0.019
&0.289\\
(1000)&\texttt{ARC-RAPG}
&37.3
&670.4
&38.3
&30.4
&2.2
&3.10E-13
&0.327
&0.035
&0.292\\
&\texttt{ARC-RBB}
&37.5
&692.4
&38.5
&30.6
&2.4
&4.56E-13
&0.340
&0.037
&0.303\\

\midrule
&\texttt{ARC-GLRT}
&208.1
&5591.5
&209.1
&155.9
&-
&1.07E+03
&1.460
&-
&1.460\\
CHAINWOO&\texttt{ARC-BB}
&310.4
&12186.4
&311.4
&228.5
&1.0
&1.11E+03
&4.031
&0.324
&3.707\\
(1000)&\texttt{ARC-RAPG}
&330.2
&14027.7
&331.2
&231.8
&199.6
&1.11E+03
&26.609
&22.560
&4.049\\
&\texttt{ARC-RBB}
&311.2
&11232.0
&312.2
&230.6
&183.6
&1.13E+03
&24.114
&20.940
&3.174\\

\midrule
&\texttt{ARC-GLRT}
&24.9
&631.7
&25.9
&23.1
&-
&1.00E+00
&0.409
&-
&0.409\\
DIXMAANF&\texttt{ARC-BB}
&20.9
&534.4
&21.9
&20.7
&1.0
&1.00E+00
&0.541
&0.116
&0.426\\
(1500)&\texttt{ARC-RAPG}
&22.7
&628.1
&23.7
&20.9
&10.9
&1.00E+00
&1.399
&0.971
&0.428\\
&\texttt{ARC-RBB}
&21.7
&523.2
&22.7
&20.8
&9.8
&1.00E+00
&1.399
&0.971
&0.428\\

\midrule
&\texttt{ARC-GLRT}
&23.6
&523.4
&24.6
&22.6
&-
&1.00E+00
&0.351
&-
&0.351\\
DIXMAANG&\texttt{ARC-BB}
&24.0
&576.8
&25.0
&22.8
&1.0
&1.00E+00
&0.578
&0.117
&0.461\\
(1500)&\texttt{ARC-RAPG}
&25.8
&806.0
&26.8
&23.4
&11.4
&1.00E+00
&1.518
&1.007
&0.511\\
&\texttt{ARC-RBB}
&24.9
&578.2
&25.9
&23.0
&10.6
&1.00E+00
&1.388
&0.975
&0.413\\

\midrule
&\texttt{ARC-GLRT}
&26.5
&602.7
&27.5
&24.3
&-
&1.00E+00
&0.402
&-
&0.402\\
DIXMAANH&\texttt{ARC-BB}
&26.7
&720.8
&27.7
&24.7
&1.0
&1.00E+00
&0.660
&0.113
&0.547\\
(1500)&\texttt{ARC-RAPG}
&29.0
&1005.8
&30.0
&25.0
&12.7
&1.00E+00
&1.712
&1.086
&0.626\\
&\texttt{ARC-RBB}
&26.7
&668.7
&27.7
&24.7
&10.5
&1.00E+00
&1.416
&0.934
&0.481\\

\midrule
&\texttt{ARC-GLRT}
&46.9
&5031.1
&47.9
&39.6
&-
&1.00E+00
&2.458
&-
&2.458\\
DIXMAANJ&\texttt{ARC-BB}
&46.2
&3309.2
&47.2
&38.7
&1.0
&1.00E+00
&3.419
&1.545
&1.873\\
(1500)&\texttt{ARC-RAPG}
&52.8
&3736.0
&53.8
&40.0
&36.1
&1.00E+00
&34.097
&32.223
&1.873\\
&\texttt{ARC-RBB}
&48.3
&3104.6
&49.3
&41.9
&31.4
&1.00E+00
&33.251
&31.679
&1.572\\

\midrule
&\texttt{ARC-GLRT}
&62.2
&6228.8
&63.2
&50.6
&-
&1.00E+00
&3.058
&-
&3.058\\
DIXMAANK&\texttt{ARC-BB}
&70.3
&5865.4
&71.3
&55.9
&1.0
&1.00E+00
&4.773
&1.530
&3.242\\
(1500)&\texttt{ARC-RAPG}
&82.3
&6898.9
&83.3
&56.9
&62.2
&1.00E+00
&55.014
&51.676
&3.338\\
&\texttt{ARC-RBB}
&74.6
&5836.4
&75.6
&61.0
&54.3
&1.00E+00
&52.804
&49.958
&2.846\\

\midrule
&\texttt{ARC-GLRT}
&66.4
&6159.8
&67.4
&54.0
&-
&1.00E+00
&2.981
&-
&2.981\\
DIXNAANL&\texttt{ARC-BB}
&73.0
&5840.7
&74.0
&58.4
&1.0
&1.00E+00
&4.714
&1.505
&3.209\\
(1500)&\texttt{ARC-RAPG}
&83.0
&7475.0
&84.0
&59.8
&60.0
&1.00E+00
&52.685
&49.103
&3.582\\
&\texttt{ARC-RBB}
&76.1
&6458.4
&77.1
&63.3
&53.4
&1.00E+00
&55.178
&52.098
&3.080\\

\midrule
&\texttt{ARC-GLRT}
&1762.6
&51785.4
&1763.6
&1233.4
&-
&1.62E-08
&13.566
&-
&13.566\\
EXTROSNB&\texttt{ARC-BB}
&1368.5
&196671.1
&1369.5
&1139.3
&1.0
&2.97E-06
&49.540
&0.007
&49.533\\
(1000)&\texttt{ARC-RAPG}
&1386.6
&199155.1
&1387.6
&1116.3
&1277.4
&2.98E-06
&57.130
&8.713
&48.417\\
&\texttt{ARC-RBB}
&1393.9
&200360.0
&1394.9
&1118.7
&1284.9
&2.99E-06
&57.412
&8.750
&48.662\\

\midrule
&\texttt{ARC-GLRT}
&1790.8
&42046.0
&1791.8
&1215.2
&-
&7.97E-01
&10.844
&-
&10.844\\
FLETCHCR&\texttt{ARC-BB}
&1775.8
&49084.9
&1776.8
&1245.5
&1.0
&7.97E-01
&17.060
&0.012
&17.048\\
(1000)&\texttt{ARC-RAPG}
&1781.6
&49874.9
&1782.6
&1248.8
&567.9
&7.97E-01
&28.072
&9.042
&19.030\\
&\texttt{ARC-RBB}
&1830.3
&51176.3
&1831.3
&1272.7
&662.8
&7.97E-01
&30.574
&11.081
&19.493\\

\midrule
&\texttt{ARC-GLRT}
&35.8
&338.2
&36.8
&30.5
&-
&1.17E+05
&0.254
&-
&0.254\\
FREUROTH&\texttt{ARC-BB}
&36.4
&1378.7
&37.4
&31.2
&1.0
&1.17E+05
&0.487
&0.008
&0.479\\
(1000)&\texttt{ARC-RAPG}
&35.0
&1261.3
&36.0
&30.5
&23.0
&1.17E+05
&0.681
&0.199
&0.482\\
&\texttt{ARC-RBB}
&36.1
&1425.4
&37.1
&30.4
&23.8
&1.17E+05
&0.704
&0.198
&0.506\\

\midrule
&\texttt{ARC-GLRT}
&1676.4
&49839.0
&1677.4
&1027.0
&-
&2.47E-11
&12.462
&-
&12.462\\
GENHUMPS&\texttt{ARC-BB}
&1529.6
&41024.1
&1530.6
&926.6
&1.0
&1.55E-11
&14.545
&0.008
&14.537\\
(1000)&\texttt{ARC-RAPG}
&1529.5
&41090.9
&1530.5
&926.6
&9.1
&1.59E-11
&15.137
&0.132
&15.006\\
&\texttt{ARC-RBB}
&1529.5
&41081.9
&1530.5
&926.4
&9.1
&1.58E-11
&14.887
&0.134
&14.753\\

\midrule
&\texttt{ARC-GLRT}
&978.8
&20138.0
&979.8
&660.7
&-
&1.00E+00
&1.953
&-
&1.953\\
GENROSE&\texttt{ARC-BB}
&1097.5
&28322.9
&1098.5
&748.3
&1.0
&1.00E+00
&2.024
&0.004
&2.020\\
(500)&\texttt{ARC-RAPG}
&1097.5
&28850.6
&1098.5
&746.1
&221.3
&1.00E+00
&3.142
&0.804
&2.338\\
&\texttt{ARC-RBB}
&1093.1
&28496.0
&1094.1
&747.0
&218.1
&1.00E+00
&3.450
&0.935
&2.515\\

\midrule
&\texttt{ARC-GLRT}
&66.7
&8319.3
&67.7
&62.4
&-
&2.32E+03
&1.866
&-
&1.866\\
NONCVXU2&\texttt{ARC-BB}
&123.7
&8467.1
&124.7
&122.6
&1.0
&2.32E+03
&3.067
&0.697
&2.369\\
(1000)&\texttt{ARC-RAPG}
&116.1
&7497.0
&117.1
&116.0
&114.1
&2.32E+03
&67.010
&64.964
&2.046\\
&\texttt{ARC-RBB}
&128.5
&7936.8
&129.5
&122.6
&125.5
&2.32E+03
&71.118
&68.930
&2.188\\

\midrule
&\texttt{ARC-GLRT}
&264.2
&213656.6
&265.2
&258.6
&-
&2.32E+03
&43.103
&-
&43.103\\
NONCVXUN&\texttt{ARC-BB}
&1719.2
&240580.4
&1720.2
&1716.4
&1.0
&2.32E+03
&62.662
&0.753
&61.909\\
(1000)&\texttt{ARC-RAPG}
&1968.8
&277075.0
&1969.8
&1967.6
&1966.5
&2.32E+03
&1255.385
&1195.682
&59.704\\
&\texttt{ARC-RBB}
&2220.7
&312062.4
&2221.7
&2214.2
&2217.5
&2.32E+03
&1413.231
&1345.627
&67.604\\

\midrule
&\texttt{ARC-GLRT}
&41.7
&7781.1
&42.7
&33.0
&-
&4.55E-01
&0.774
&-
&0.774\\
OSCIPATH&\texttt{ARC-BB}
&72.0
&7779.9
&73.0
&66.1
&1.0
&4.55E-01
&0.718
&0.215
&0.502\\
(500)&\texttt{ARC-RAPG}
&99.7
&11934.9
&100.7
&75.7
&66.4
&4.55E-01
&15.259
&14.305
&0.954\\
&\texttt{ARC-RBB}
&75.8
&8345.5
&76.8
&64.4
&42.2
&4.55E-01
&10.241
&9.719
&0.522\\

\midrule
&\texttt{ARC-GLRT}
&15.7
&93.9
&16.7
&13.0
&-
&1.00E+01
&0.086
&-
&0.086\\
TOINTGSS&\texttt{ARC-BB}
&14.9
&270.8
&15.9
&12.1
&1.0
&1.00E+01
&0.128
&0.008
&0.121\\
(1000)&\texttt{ARC-RAPG}
&17.4
&613.9
&18.4
&13.7
&12.1
&1.00E+01
&0.319
&0.090
&0.229\\
&\texttt{ARC-RBB}
&17.6
&511.2
&18.6
&13.4
&12.3
&1.00E+01
&0.291
&0.096
&0.196\\

\midrule
&\texttt{ARC-GLRT}
&67.1
&306.9
&68.1
&55.2
&-
&9.77E-14
&0.354
&-
&0.354\\
TQUARTIC&\texttt{ARC-BB}
&75.6
&600.4
&76.6
&59.0
&1.0
&1.16E-11
&0.467
&0.010
&0.457\\
(1000)&\texttt{ARC-RAPG}
&76.0
&907.8
&77.0
&59.1
&7.5
&3.72E-10
&0.677
&0.107
&0.570\\
&\texttt{ARC-RBB}
&76.5
&730.8
&77.5
&59.8
&6.6
&1.74E-11
&0.570
&0.081
&0.489\\

\midrule
&\texttt{ARC-GLRT}
&294.8
&4663.0
&295.8
&217.5
&-
&2.16E-15
&1.625
&-
&1.625\\
WOODS&\texttt{ARC-BB}
&393.0
&9877.9
&394.0
&276.5
&1.0
&1.8E-14
&3.164
&0.009
&3.154\\
(1000)&\texttt{ARC-RAPG}
&393.7
&10102.2
&394.7
&275.9
&5.8
&6.9E-14
&3.559
&0.070
&3.489\\
&\texttt{ARC-RBB}
&392.8
&9847.9
&393.8
&276.5
&5.5
&1.99E-16
&3.535
&0.064
&3.471\\
\bottomrule
\caption{\small Results on the CUTEst problems}
\label{tab:cutest}
\end{longtable}
\begin{figure}[!h]
    \centering
    \includegraphics[width=12cm]{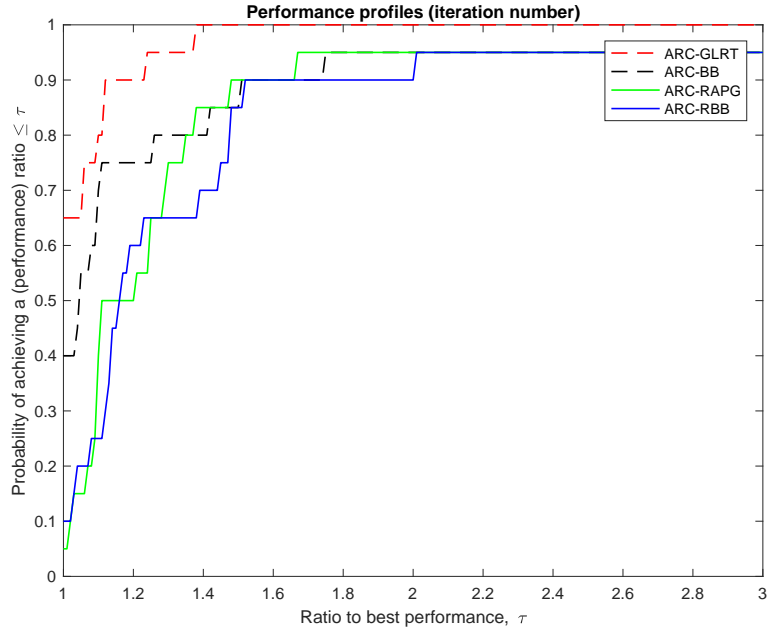}
    \caption{Performance profiles for iteration number for ARC-GLRT, ARC-BB, ARC-RAPG and ARC-RBB on the CUTEst problems}
    \label{fig:1}
  \end{figure}
  \begin{figure}[!h]
    \centering
    \includegraphics[width=12cm]{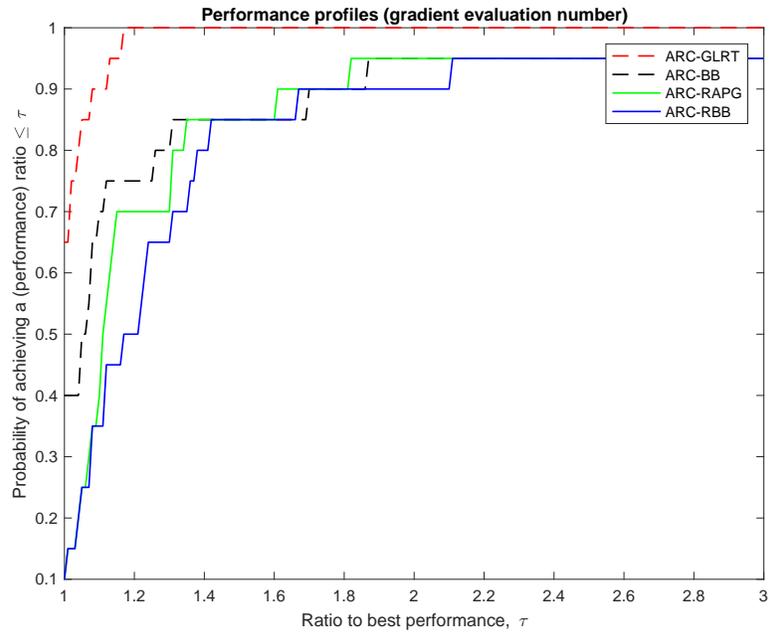}
    \caption{Performance profiles for gradient evaluations for ARC-GLRT, ARC-BB, ARC-RAPG and ARC-RBB on the CUTEst problems}
    \label{fig:2}
  \end{figure}
  \begin{figure}[!h]
    \centering
    \includegraphics[width=12cm]{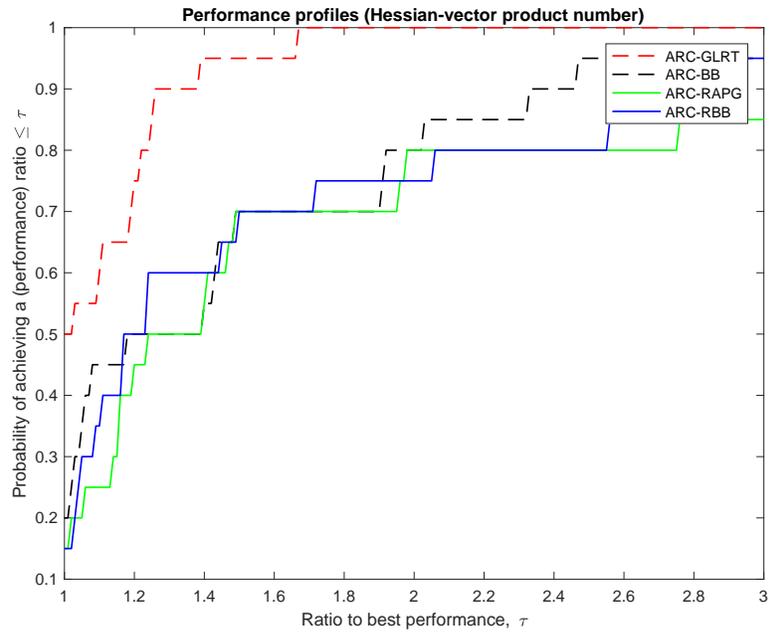}
    \caption{Performance profiles for Hessian-vector products for ARC-GLRT, ARC-BB, ARC-RAPG and ARC-RBB on the CUTEst problems}
    \label{fig:3}
  \end{figure}
  \begin{figure}[h]
    \centering
    \includegraphics[width=12cm]{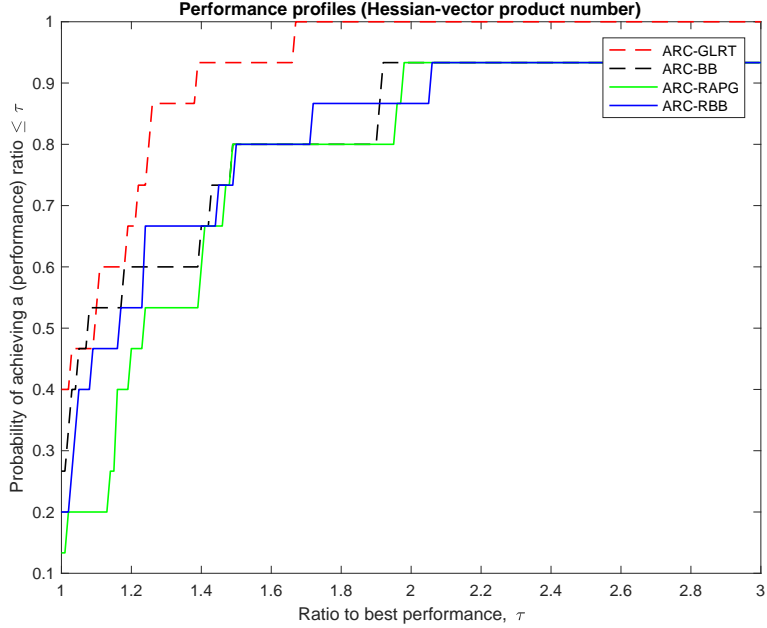}
    \caption{Performance profiles of Hessian-vector products for ARC-GLRT, ARC-BB, ARC-RAPG and ARC-RBB on the CUTEst problems where algorithms require a CPU time more than 1 second}
    \label{fig:4}
  \end{figure}
}

{\centering
\small
\begin{longtable}{c|c|cc|cc}

  \toprule
  \multirow{2}{*}{Problem}
  &\multirow{2}{*}{Index}
  &\multicolumn{2}{c|}{ARC-RAPG}
  &\multicolumn{2}{c}{ARC-RBB} \\
  \cmidrule(r){3-4} \cmidrule(r){5-6}
  &&  ARC-GLRT  & ARC-BB
  &  ARC-GLRT & ARC-BB \\

  \midrule
  \multirow{3}{*}{BROYDN7D}
  & $n_i$&6 &4 &6 &0\\
  & $n_{\text{prod}}$&4 &3 &4 &2\\
  & $n_g$ &5 &0 &4 &2\\

  \midrule
  \multirow{3}{*}{BRYBND}
  & $n_i$&6 &0 &6 &0\\
  & $n_{\text{prod}}$&4 &4 &4 &3\\
  & $n_g$ &6 &0 &6 &0\\

  \midrule
  \multirow{3}{*}{CHAINWOO}
  & $n_i$&0 &1 &0 &4\\
  & $n_{\text{prod}}$&0 &0 &0 &6\\
  & $n_g$ &0 &2 &0 &3\\

  \midrule
  \multirow{3}{*}{DIXMAANF}
  & $n_i$&8 &1 &9 &2\\
  & $n_{\text{prod}}$&5 &3 &7 &6\\
  & $n_g$ &9 &2 &9 &3\\

  \midrule
  \multirow{3}{*}{DIXMAANG}
  & $n_i$&3 &1 &3 &1\\
  & $n_{\text{prod}}$&3 &2 &3 &5\\
  & $n_g$ &3 &1 &3 &1\\

  \midrule
  \multirow{3}{*}{DIXMAANH}
  & $n_i$&5 &1 &5 &3\\
  & $n_{\text{prod}}$&3 &1 &4 &6\\
  & $n_g$ &4 &2 &5 &2\\

  \midrule
  \multirow{3}{*}{DIXMAANJ}
  & $n_i$&2 &2 &2 &4\\
  & $n_{\text{prod}}$&8 &3 &8 &5\\
  & $n_g$&4 &2 &2 &0\\

  \midrule
  \multirow{3}{*}{DIXMAANK}
  & $n_i$&1 &1 &0 &3\\
  & $n_{\text{prod}}$&3 &3 &6 &6\\
  & $n_g$&2 &2 &0 &0\\

  \midrule
  \multirow{3}{*}{DIXMAANL}
  & $n_i$&1 &2 &1 &4\\
  & $n_{\text{prod}}$&4 &3 &6 &4\\
  & $n_g$&2 &3 &0 &2\\

  \midrule
  \multirow{3}{*}{EXTROSNB}
  & $n_i$&10 &4 &10 &2\\
  & $n_{\text{prod}}$&0 &4 &0 &2\\
  & $n_g$&9 &7 &10 &7\\

  \midrule
  \multirow{3}{*}{FLETCHCR}
  & $n_i$&6 &5 &4 &1\\
  & $n_{\text{prod}}$&2 &3 &1 &2\\
  & $n_g$&3 &4 &2 &1\\

  \midrule
  \multirow{3}{*}{FREUROTH}
  & $n_i$&7 &7 &6 &4\\
  & $n_{\text{prod}}$&0 &6 &0 &6\\
  & $n_g$&6 &6 &4 &5\\

  \midrule
  \multirow{3}{*}{GENHUMPS}
  & $n_i$&10 &1 &10 &2\\
  & $n_{\text{prod}}$&10 &0 &10 &0\\
  & $n_g$&10 &1 &10 &2\\

  \midrule
  \multirow{3}{*}{GENROSE}
  & $n_i$&3 &6 &2 &5\\
  & $n_{\text{prod}}$&0 &2 &2 &5\\
  & $n_g$&2 &7 &1 &6\\

  \midrule
  \multirow{3}{*}{NONCVXU2}
  & $n_i$&0 &7 &0 &2\\
  & $n_{\text{prod}}$&6 &8 &7 &5\\
  & $n_g$ &0 &7 &0 &4\\

  \midrule
  \multirow{3}{*}{NONCVXUN}
  & $n_i$&0 &3 &0 &2\\
  & $n_{\text{prod}}$&2 &3 &2 &2\\
  & $n_g$&0 &3 &0 &2\\

  \midrule
  \multirow{3}{*}{OSCIPATH}
  & $n_i$&0 &1 &0 &3\\
  & $n_{\text{prod}}$&2 &1 &4 &3\\
  & $n_g$&0 &2 &0 &5\\

  \midrule
  \multirow{3}{*}{TOINTGSS}
  & $n_i$&4 &2 &3 &0\\
  & $n_{\text{prod}}$&0 &2 &0 &0\\
  & $n_g$&4 &3 &3 &0\\

  \midrule
  \multirow{3}{*}{TQUARTIC}
  & $n_i$ &2 &5 &2 &2\\
  & $n_{\text{prod}}$&0 &1 &0 &2\\
  & $n_g$&3 &5 &0 &2\\

  \midrule
  \multirow{3}{*}{WOODS}
  & $n_i$&0 &5 &0 &5\\
  & $n_{\text{prod}}$&0 &2 &0 &5\\
  & $n_g$&0 &4 &0 &3\\

  \bottomrule

  \caption{\small
  The number of times ARC-RAPG (or ARC-RBB) performs better than the other two algorithms in 10 realizations with different initial points on each CUTEst problem,
  considering the number of iterations, the number of Hessian-vector products and the number of gradient evaluations
  }
  \label{tab:cutestfull}
\end{longtable}
}

\section{Conclusion}
In this paper, we developed a novel approach for solving the problem~\eqref{opt:CRS}.
We first equivalently reformulate the problem~\eqref{opt:CRS} to a convex constrained optimization problem, where the feasible region admits an easy projection and the objective function is formed by using the minimum eigenvalue of the Hessian matrix. To circumvent the expensive cost due to the exact computation of the minimum eigenvalue, we then constructed a surrogate problem which is again a convex constrained optimization problem with a feasible region that admits an easy projection and can be solved by a variety of methods such as APG and BBM.
Furthermore, we proved that an $\epsilon$-approximate solution to \eqref{opt:CRS} can be obtained in at most  $\tilde O(\epsilon^{-1/2})$ matrix-vector multiplications if we use the Lanczos method for approximate eigenvalue computation and APG to approximately solve the surrogate problem.
Numerical results showed that our methods are comparable to the Krylov subspace method
in the easy case and significantly outperform the Krylov subspace method in the hard case.
We also implemented variants of ARC where the subproblem solver uses our approaches. The resulting ARC algorithms showed good numerical performance on the problem instances from the CUTEst datasets.
As future work, we plan  to investigate the complexity of cubic regularization or ARC variants with subproblem solver based on our reformulations for finding an approximate stationary point and local minimizer of smooth non-convex minimization problems.

\appendix
\section{Analysis for problem \eqref{opt:AP}}\label{app:b}
The purpose of this appendix is to show that when the approximate minimum eigenvalue $\theta$ of $A$ is close enough to the exact minimum eigenvalue $\lambda_1$ ($\lambda_1\le \theta<-\bar\lambda$ for some $\bar\lambda$ defined in the following paragraphs), problem \eqref{opt:AP}  can be used to construct an approximate solution to \eqref{opt:CRS}.
Define $\hat{B}:=\{(x,y): \norm{x}^2\leq y,~y\ge \hat{l}\,\}$. We claim that the problem \eqref{opt:AP} simplifies to
\begin{equation} \label{opt:AP0}\tag{AP$_0$}
\begin{array}{c@{\quad}l}
\displaystyle\min_{x\in \mathbb{R}^n, y\in \mathbb{R}} & \displaystyle \frac{1}{2} x^T(A-\theta I) x+b^Tx+\frac{\rho}{3}y^\frac{3}{2}+\frac{\theta}{2}y \\
\noalign{\smallskip}
\mbox{subject to} & \displaystyle \norm{x}^2\leq y,
\end{array}
\end{equation}
i.e., the constraint $y\ge l$ is redundant.
This is because when $y\le l$ in \eqref{opt:AP0}, $\frac{\rho}{3}y^\frac{3}{2}+\frac{\theta}{2}y$ is decreasing in $y$, and thus the optimal solution of \eqref{opt:AP0} must satisfy $y\ge l$.
We consider the following two cases.

\textbf{The hard case:} Recall the optimality condition \eqref{eq:optCRS} for \eqref{opt:CRS}.
Note that $\|(A+\lambda I)^{\dagger}b\|^2-\lambda^2/\rho^2\le 0$ and $\|(A+\lambda I)^{\dagger}b\|^2-\lambda^2/\rho^2$ is a decreasing function in $\lambda$, where $(\cdot)^\dagger$ denotes the \emph{Moore--Penrose pseudoinverse}, when $\lambda\ge-\lambda_1$ because we are  in the hard case \cite{cartis2011adaptive}.
First consider the case that  $\|(A-\lambda_{1} I)^{\dagger}b\|^2-\lambda_1^2/\rho^2< 0$. Let $\bar\lambda$ be the largest $\lambda\in[0,-\lambda_1)$ such that $\|(A+\lambda I)^{\dagger}b\|^2-\lambda^2/\rho^2=0$, if it exists. If such $\bar\lambda$ does not exist, we set $\bar\lambda=0$. Using the fact  $\|(A+\lambda I)^{\dagger}b\|^2-\lambda^2/\rho^2<0$ for $\lambda\ge-\lambda_1$, we have
\begin{equation}
\label{eq:hardbar}
\|(A+\lambda I)^{\dagger}b\|^2-\frac{\lambda^2}{\rho^2} < 0,  ~\forall \lambda>\bar\lambda.
\end{equation}
Suppose that $\lambda_1\le \theta<-\bar\lambda$ and that $(x^\theta, y^\theta)$ is an optimal solution of \eqref{opt:AP0}. Let $\mu$ be the Lagrange multiplier corresponding to the constraint $\norm{x^\theta}^2\le y^\theta$. Then, the KKT condition of \eqref{opt:AP0} implies  $Ax^\theta-\theta x^\theta+b+2\mu x^\theta=0$ and $\rho\sqrt {y^\theta}/2+\theta/2-\mu=0$.
Due to $-\theta +2\mu>\bar\lambda$, we have $\|x^\theta\|^2<y^\theta,\forall \mu\ge0$ from \eqref{eq:hardbar}.
Using the complementary slackness $\mu(\|x^\theta\|^2-y^\theta)=0$, we have that $\mu=0$.
Thus, every possible stationary point of \eqref{opt:AP0} can be written as $(x^\theta,y^\theta)=((A-\theta I)^{\dagger}b+tv$, $(-\theta/\rho)^2)$, where $t$ is a scalar satisfying $\|x^\theta+tv\|\le\sqrt {y^\theta}$. This in turn yields an approximate optimal solution $x^\theta+tv$ to \eqref{opt:CRS}, where one should note that different $t$ yields the same objective value. Next, we consider the remaining case that $\|(A+\lambda_{1} I)^{\dagger}b\|^2-\lambda_1^2/\rho^2=0$.
This case is similar to the easy case, where we can recover an optimal solution if $\theta\in[\lambda_1,-\bar\lambda)$, where $\bar\lambda$ is the largest $\lambda\in[0,-\lambda_1)$ such that  $\|(A+\lambda I)^{\dagger}b\|^2-\lambda^2/\rho^2=0$. (If such $\bar\lambda$ does not exist, we take $\bar\lambda=0$.) The analysis is similar to the easy case below and hence omitted here.

\textbf{The easy case:} Recall that in the easy case we have a unique optimal solution $x^*$ satisfying $\rho\|x^*\|>-\lambda_1$ and $x^*=(A+\rho\|x^*\|I)^{-1}b$; see, e.g., Theorem 3.1 in \cite{cartis2011adaptive}. Let $\bar\lambda$ be the largest $\lambda\in[0,-\lambda_1)$ such that  $h(\lambda):=\|(A+\lambda I)^{\dagger}b\|^2-\lambda^2/\rho^2=0$, if it exists. If such $\bar\lambda$ does not exist, we take $\bar\lambda=0$.  
Then for all $\lambda\in(\bar\lambda,-\lambda_1)$, $h(\lambda)>0$ as $\lim_{\lambda\rightarrow-\lambda_1} h(\lambda)=+\infty$. 
This, together with the definition of $\bar\lambda$ and the fact that $h(\lambda)$ is a decreasing function on $(-\lambda_1,+\infty)$, implies that there is only one point, denoting $\tilde\lambda$ in $(\bar\lambda,+\infty)$ satisfying $\|(A+\lambda I)^{\dagger}b\|^2-\lambda^2/\rho^2=0$. Let $\tilde x=(A+\tilde\lambda I)^{\dagger}b$. The optimality condition \eqref{eq:optCRS} implies that $\tilde x$ is the unique optimal solution of \eqref{opt:CRS}.
We again suppose that $\lambda_1\le \theta<-\bar\lambda$.
If $\theta=\lambda_1$, \eqref{opt:AP0} reduces to the exact reformulation \eqref{opt:CP}.
Next, we consider the case that $\lambda_1<\theta<-\bar\lambda$. Assuming that $\mu=0$, the inequality $\lambda_1<\theta<-\bar\lambda$ implies that $\|x^\theta\|=\|(A-\theta I)^\dagger b\|>\sqrt {y^\theta}=-\theta/\rho $, which violates the constraint $\norm{x}^2\le y$.
Hence, we must have $\mu>0$, and thus we always have  $\|x^\theta\|=\sqrt {y^\theta}$, i.e., $\|(A+(-\theta+\mu)I)^{\dagger}b\|=(-\theta+\mu)/\rho$. This implies $-\theta+\mu=\lambda^*$. That is, we recover the optimal solution if  $\theta<-\bar\lambda$.
\section*{Acknowledgments}
Rujun Jiang is supported by National Natural Science Foundation of China under Grant 11801087. Man-Chung Yue is supported by the Hong Kong Research Grants Council under the grant 25302420.

%
%

\bibliographystyle{abbrv}
\bibliography{ref}
\end{document}